\newcommand\rmlabel{\upshape({\itshape\roman*\,\/})}
\newtheorem{theorem}{Theorem}
\theoremstyle{plain}
\newtheorem{definition}[theorem]{Definition}
\newtheorem{lemma}[theorem]{Lemma}
\newtheorem{proposition}[theorem]{Proposition}
\numberwithin{equation}{section}
\numberwithin{theorem}{section}
\numberwithin{case}{section}
\numberwithin{subcase}{case}
\renewcommand{\PrintDOI}[1]{\doi{#1}}
\newcommand*\patchAmsMathEnvironmentForLineno[1]{%
\expandafter\let\csname old#1\expandafter\endcsname\csname #1\endcsname
\expandafter\let\csname oldend#1\expandafter\endcsname\csname end#1\endcsname
\renewenvironment{#1}%
{\linenomath\csname old#1\endcsname}%
{\csname oldend#1\endcsname\endlinenomath}}%
\newcommand*\patchBothAmsMathEnvironmentsForLineno[1]{%
\patchAmsMathEnvironmentForLineno{#1}%
\patchAmsMathEnvironmentForLineno{#1*}}%
\def \A{\mathcal{A}}
\def \F{\mathcal{F}}
\def \Z{\mathcal{Z}}
\def \bbN{\mathbb{N}}
\def\a{\alpha}
\def\b{\beta}
\def \eps{\varepsilon}
\def \r{\gamma}
\def \l{\lambda}
\def\COMMENT#1{}
\let\COMMENT=\footnote
\let\subset\subseteq
\let\epsilon\varepsilon
\DeclareMathOperator{\ex}{{\rm ex}}
\begin{document}

\title{Factors in randomly perturbed hypergraphs}

\author[Y.~Chang]{Yulin Chang}
\address{(Y. Chang) School of Mathematics, Shandong University, Jinan,
  250100, China}
\email{ylchang93@163.com}

\author[J.~Han]{Jie Han}
\address{(J. Han) Department of Mathematics, University of Rhode
  Island, Kingston, RI, 02881, USA}
\email{jie\_han@uri.edu}

\author[Y.~Kohayakawa]{Yoshiharu Kohayakawa}
\address{(Y. Kohayakawa and G. O. Mota) Instituto de Matem\'atica e Estat\'{\i}stica, Universidade de
  S\~ao Paulo, S\~ao Paulo, Brazil}
\email{\{yoshi|mota\}@ime.usp.br}

\author[P.~Morris]{Patrick Morris}
\address{(P. Morris) Freie Universit\"at Berlin and Berlin
  Mathematical School, Berlin, Germany}
\email{pm0041@math.fu-berlin}

\author[G.~O.~Mota]{Guilherme Oliveira Mota}

\thanks{Part of this work was done while the first author was at the
  University of Rhode Island as a visiting student, partially
  supported by the Chinese Scholarship Council. The research of the
  second author was supported in part by Simons Foundation \#630884.
  The research of the third author was supported by CNPq
  (311412/2018-1, 423833/2018-9) and FAPESP (2018/04876-1,
  2019/13364-7).  The
  research of the fourth author was supported in part by the Deutsche
  Forschungsgemeinschaft (DFG, German Research Foundation) under
  Germany's Excellence Strategy - The Berlin Mathematics Research
  Center MATH+ (EXC-2046/1, project ID: 390685689).  The research of
  the fifth author was supported by CNPq (306620/2020-0,
  428385/2018-4) and FAPESP (2018/04876-1, 2019/13364-7).  This
  research was
  partially supported by CAPES (Finance Code 001).  FAPESP is the
  S\~ao Paulo Research Foundation.  CNPq is the National Council for
  Scientific and Technological Development of Brazil.
}

\begin{abstract}%
  We determine, up to a multiplicative constant, the optimal number of
  random edges that need to be added to a $k$-graph~$H$ with minimum
  vertex degree~$\Omega(n^{k-1})$ to ensure an $F$-factor with  high
  probability, for any~$F$ that belongs to a certain
  class~$\mathcal{F}$ of $k$-graphs, which includes, e.g., all
  $k$-partite $k$-graphs,  $K_4^{(3)-}$ and the Fano plane.
  In particular, taking~$F$ to be a single edge, this settles a
  problem of Krivelevich, Kwan and Sudakov [Combin.\ Probab.\
  Comput.~25 (2016), 909--927].
  We also address the case in which the host graph~$H$ is not dense,
  indicating that starting from certain such~$H$ is essentially the
  same as starting from an empty graph (namely, the purely random
  model).
\end{abstract}

\shortdate
\yyyymmdddate
\settimeformat{ampmtime}
\date{}
\maketitle

\section{Introduction}

\subsection{$F$-factors}
Given graphs $F$ and $G$, an \emph{$F$-tiling} of $G$ is a collection of vertex-disjoint copies of $F$ in $G$. An $F$-tiling of $G$ is called \emph{perfect} if it covers all the vertices of $G$.
A perfect $F$-tiling is also referred to as an \emph{$F$-factor} or a \emph{perfect $F$-packing}.  Note that a perfect matching corresponds
to a $K_2$-factor.

Kirkpatrick and Hell~\cite{HK} showed that the problem of deciding whether a graph $G$ has an $F$-factor is NP-complete if and only if $F$ has a component which contains three or more vertices.
Thus it is natural to ask for conditions that guarantee the existence of an $F$-factor in a graph~$G$, for such graphs~$F$.
The celebrated Hajnal--Szemer\'{e}di theorem~\cite{HS} states that every graph on $n$ vertices with minimum degree at least $(1 -1/r)n$ contains a $K_r$-factor.  For arbitrary graphs $F$, K\"{u}hn and Osthus~\cite{KO} determined, up to an additive constant, the minimum degree of a graph $G$ which ensures an $F$-factor in $G$, improving results of \cite{AY,KSS}.

Let $k\geq 2$ be given.
A \emph{$k$-uniform hypergraph}, or \emph{$k$-graph} for short, $H=(V,E)$ consists of a \emph{vertex set} $V$ and an \emph{edge set} $E\subseteq \binom{V}{k}$, where $\binom{V}{k}$ is the family of all the $k$-subsets of $V$.
If $E=\binom{V}{k}$, then $H$ is a \emph{complete $k$-graph}, denoted by $K_n^{(k)}$, where~$n=|V|$.
For any $d$-subset $S\subseteq V(H)$ with $1\leq d\leq k-1$, we define the \emph{degree} of $S$ to be $\deg_H(S):=\left|\left\{e\in E(H):\, S\subseteq e\right\}\right|$.
The \emph{minimum $d$-degree} $\delta_{d}(H)$ of $H$ is the minimum of $\deg_H(S)$ over all $d $-subsets $S$ of $V(H)$.
We often say that $\delta_{k-1}(H)$ is the \emph{minimum codegree} of $H$ and $\delta(H):=\delta_{1}(H)$ is the \emph{minimum vertex degree} of $H$.
We will be particularly concerned with \emph{dense} $k$-graphs, which can be defined using a minimum $d$-degree condition for any $1 \leq d \leq k-1$.
These degree conditions form a hierarchy, because if $\delta_d(H)=\Omega(n^{k-d})$ for some $1\leq d \leq k-1$, then $\delta_{d'}(H)=\Omega(n^{k-{d'}})$ for any $d'\leq d$.
Hence we have that requiring density with respect to minimum vertex degree is the weakest possible such condition, whilst requiring a linear minimum codegree is the strongest such condition.

The definition of $F$-factors extends naturally to hypergraphs, and it is very natural to study degree conditions that ensure the existence of $F$-factors in this generalised setting.
However, the problem becomes significantly harder, even for the simplest possible $k$-graphs~$F$.  For example, the minimum vertex degree threshold forcing the existence of a perfect matching in $k$-graphs remains unknown for $k\ge 6$.
For more results on factors in graphs and hypergraphs, we refer the reader to the excellent surveys~\cite{KO1,RR,Zhao}.

Another well-studied object in graph theory is the \emph{binomial random graph $G(n, p)$}, which has~$n$ vertices and each of its edges is present with probability~$p$, independently of all the other edges.
The \emph{binomial random $k$-graph}, which we denote by~$H^{(k)}(n,p)$, is defined analogously.
Determining the threshold for the appearance of $F$-factors in~$H^{(k)}(n,p)$ has been a notoriously hard problem, even for simple~$F$.
The threshold depends on a parameter of~$F$ defined as follows.
Given a ($k$-)graph $F$, we use $v_F$ and $e_F$ to denote, respectively, the number of vertices and edges in~$F$.
If~$F$ is a ($k$-)graph on at least two vertices, define
\begin{equation*}
    d^*(F):=\max\left\{\frac{e_{F'}}{v_{F'}-1}:\,F'\subseteq F,\
      v_{F'}\geq 2\right\}.
\end{equation*}
In an outstanding piece of work, Johansson, Kahn and Vu~\cite{JKV} made huge progress on this problem for both graphs and hypergraphs.
They conjectured that the threshold for the appearance of an $F$-factor in a binomial random ($k$-)graph is
\begin{equation}
  \label{eq:factor_threshold}
  \ell(n;F)n^{-1/d^*(F)},
\end{equation}
where $\ell(n;F)$ is an explicit polylogarithmic factor which depends on the structure of~$F$; see~\cite{JKV} for details.
Furthermore, they proved that the conjecture is true when we replace the~$\ell(n;F)$ term by some function which is $n^{o(1)}$ and they determined the exact threshold for all strictly balanced ($k$-)graphs~$F$, in which case one has $\ell(n;F)=(\log n)^{1/e_F}$.
The conjecture has now also been proven for the so-called non-vertex-balanced graphs~$F$, by Gerke and McDowell~\cite{GM}.
In this case, one has that $\ell(n;F)=1$.

\subsection{Randomly perturbed graphs}

In 2003, Bohman, Frieze and Martin~\cite{BFM} considered the problem of determining how many random edges one needs to add to a dense graph in order to guarantee that the resulting graph satisfies some structural property asymptotically almost surely (a.a.s.).
In recent years, the model has been extensively studied, for example exploring the Ramsey properties of such graphs~\cite{DMT,DT,KST,P}, and we now
know a wide range of results concerning embedding spanning subgraphs, such as bounded degree spanning trees and powers of Hamilton cycles
into randomly perturbed graphs (see, e.g.,~\cite{ADRRS,BTW,BDF,BHKMPP,BMPP,DRRS,HPT,JK,KKS,NT}).
In particular, Balogh, Treglown and Wagner~\cite{BTW} determined, for any fixed graph $F$, the number of random edges one needs to add to a graph $G$ of linear minimum degree to ensure that the resulting graph contains an $F$-factor a.a.s.

For two ($k$-)graphs $G$ and $G'$, denote by $G\cup G'$ the ($k$-)graph with vertex set $V(G)\cup V(G')$ and edge set $E(G)\cup E(G')$.

\begin{theorem}[Balogh, Treglown and Wagner~\cite{BTW}]
  \label{thm:BTW}
  Let $F$ be a fixed graph with $e_F>0$ and let $n\in \bbN$ be divisible by $v_F$.
  For every $\a > 0$, there exists $c = c(\a,F) > 0$ such that if $p\geq cn^{-1/d^*(F)}$ and $G$ is an $n$-vertex graph with minimum degree $\delta(G)\geq \a n$, then a.a.s. $G\cup G(n,p)$ contains an $F$-factor.
\end{theorem}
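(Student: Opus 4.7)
The plan is to use the absorption method with a two-round exposure of the random edges. Write $G(n,p) = G_1 \cup G_2$ as the union of two independent random graphs on $V(G)$ each with edge probability $p'\sim p/2$, so that each $G_i$ still satisfies $p' \geq c' n^{-1/d^*(F)}$ for a suitably large $c' = c'(\alpha, F)$. The role of $G_1$ is to support a robust absorbing set inside $V(G)$, and the role of $G_2$ is to produce a near-perfect $F$-tiling of almost the entire vertex set.

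First, I would build an \emph{absorbing set} $A \subseteq V(G)$ with $|A| \leq \beta n$, for a small $\beta = \beta(\alpha, F) > 0$, such that for every $W \subseteq V(G) \setminus A$ with $|W| \leq \beta^2 n$ and $v_F \mid |A| + |W|$, the graph $(G \cup G_1)[A \cup W]$ has an $F$-factor. The construction follows the standard absorber paradigm: for each $v_F$-tuple $U$ in $V(G)$, an \emph{$U$-absorber} is a small set $L$ disjoint from $U$ such that both $(G \cup G_1)[L]$ and $(G \cup G_1)[L \cup U]$ admit $F$-factors. Using the minimum-degree condition $\delta(G) \geq \alpha n$, one identifies, for each $U$, many candidate skeletons for $U$-absorbers inside $G$---built from common neighbourhoods and partial copies of $F$---and $G_1$ supplies the remaining edges, at exactly the threshold $n^{-1/d^*(F)}$, needed to complete a positive fraction of these skeletons into genuine absorbers. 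A random sub-sample of $\Theta(\beta n)$ absorbers, combined with standard concentration (or the Lov\'asz Local Lemma) applied to the events that some $U$ lacks an available absorber, yields a single set $A$ that simultaneously absorbs every admissible $W$.

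Second, on $V(G) \setminus A$ I would use $G_2$ alone to find an $F$-tiling $\mathcal{T}$ covering all but a residue set $W$ of size at most $\beta^2 n$ (discarding up to $v_F - 1$ tiles to arrange $v_F \mid |A| + |W|$). Such an almost-spanning $F$-tiling in $G_2 \sim G(n, \Theta(p))$ exists a.a.s.\ by the standard covering result at the threshold $n^{-1/d^*(F)}$, obtainable via the Pippenger--Spencer/R\"odl nibble applied to the $v_F$-uniform hypergraph whose hyperedges are the copies of $F$ in $G_2$. Combining $\mathcal{T}$ with the $F$-factor of $(G \cup G_1)[A \cup W]$ produced by the absorbing property of $A$ yields the desired $F$-factor of $G \cup G(n,p)$.

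The main obstacle is the absorbing construction. At $p = \Theta(n^{-1/d^*(F)})$ the expected number of $U$-absorbers per tuple $U$ is only polynomial in $n$, and one must carefully exploit the interplay between the dense host $G$---which supplies structural flexibility around each $U$ together with the sparser portions of each absorbing gadget---and the sparse random $G_1$, whose edges are used precisely to realise copies of a densest subgraph $F^* \subseteq F$ achieving $d^*(F)$. When $F^* \subsetneq F$, the absorber must be designed so that only the edges of $F^*$ need to be supplied by $G_1$ while $G$ fills in everything else; this matching of the random edges to the densest subgraph is what makes the threshold $n^{-1/d^*(F)}$ tight.
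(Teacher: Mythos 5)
Your overall skeleton---two-round exposure of the random edges, an absorbing set built from $G\cup G_1$, an almost-perfect $F$-tiling from $G_2$, and a final absorption of the leftover---is precisely the route the paper takes to prove the more general Theorem~\ref{main}, of which Theorem~\ref{thm:BTW} is the special case $k=2$ (where $\alpha_F=0$ automatically, since every link $F_v$ is a star). However, there is a genuine gap in the step you describe as ``a random sub-sample of $\Theta(\beta n)$ absorbers, combined with standard concentration (or the Lov\'asz Local Lemma).'' That is the classical R\"odl--Ruci\'nski--Szemer\'edi absorber-pool argument, and it needs each $v_F$-tuple $U$ to lie in a \emph{constant proportion} of all candidate absorber sets, so that after thinning the pool to $O(n)$ vertices every tuple still retains $\Omega(n)$ absorbers for a greedy final assignment. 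At $p=\Theta(n^{-1/d^*(F)})$ this breaks down: the random graph contributes only $\Theta(\Phi_F)=\Theta(n)$ copies of $F$ in total, so (for balanced $F$, say) the number of genuine $U$-absorbers is only $\Theta(n)$ per tuple, and after random thinning to a pool of $O(n)$ vertices essentially all of them vanish for any fixed $U$. The paper circumvents this with \emph{template} (or \emph{robust}) absorption due to Nenadov--Pehova and Montgomery: a fixed bounded-degree bipartite auxiliary graph with strong perfect-matching properties (Lemma~\ref{lem:NP}) is used to organize only $\Theta(n)$ vertex-disjoint $U$-absorbers per tuple into a single absorbing set (Lemma~\ref{lem:abs}). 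This structured assignment of absorbers to template edges, rather than random subsampling, is the key idea your proposal is missing.

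Two smaller points. The almost-tiling step does not need the R\"odl nibble, whose near-regularity hypotheses are not obviously satisfied by the hypergraph of $F$-copies in $G_2$ at this density; instead, Lemma~\ref{lem:BHM}~\ref{BHMi} shows that a.a.s.\ every $\xi n$-subset of vertices spans a copy of $F$ in $G_2$, so a naive greedy peeling already leaves only $\xi n$ uncovered. Also, your intuition that the random edges need only realize a densest subgraph $F^*\subseteq F$ is not how the construction actually works: in Lemma~\ref{lem:absa} the random graph is asked to supply \emph{entire} labelled copies of $F$ (the auxiliary $k$-graph $A_B$), and tightness at the threshold is handled uniformly through the bound $\Phi_F(n,p)\geq cn$ from Lemmas~\ref{lem:PhiA} and~\ref{lem:PhiF}.
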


Comparing this result to~\eqref{eq:factor_threshold}, we see that, starting with a dense host graph $G$ instead of the empty graph reduces the number of random edges needed for forcing an $F$-factor by the multiplicative factor of~$\ell(n;F)$, which, e.g., is~$(\log n)^{1/e_F}$ in the case that~$F$ is strictly balanced~\cite{JKV} (in some cases~\cite{GM} there is no gain, as~$\ell(n; F)=1$ can happen).
We remark in passing that, in other contexts, the gain in the randomly perturbed model can even be polynomial in~$n$ (see, e.g., \cite{CHT,HZ,MM}).
This phenomenon is typical of the behaviour one observes in the randomly perturbed setting.

A natural question asked by Balogh, Treglown and Wagner \cite{BTW} is whether Theorem~\ref{thm:BTW} holds if we replace $\a n$ with a \emph{sublinear} term.
We note below that the answer to this question is negative in general (see Section~\ref{sec:question_from_BTW}).

\subsection{Randomly perturbed $k$-graphs}

As with graphs, randomly perturbed $k$-graphs are obtained by adding random ($k$-)edges to a certain $k$-graph.
Krivelevich, Kwan and Sudakov~\cite{KKS2} considered perfect matchings (and loose Hamilton cycles) under a minimum codegree condition in randomly perturbed $k$-graphs.
They proved that for every $k\geq 3$ and every $\a > 0$, there exists $\l > 0$ such that, if $H$ is a $k$-graph on $n\in k\bbN$ vertices with $\delta_{k-1}(H) \geq \a n$ and $p\geq \l n^{1-k}$, then a.a.s. the union $H\cup H^{(k)}(n,p)$ contains a perfect matching.
In addition, they also raised the analogous question for weaker minimum degree conditions.
For more results on randomly perturbed hypergraphs see \cite{BHKM, HZ,MM}.

In this paper we study $F$-factors in randomly perturbed hypergraphs.
In particular, we solve the aforementioned question of Krivelevich, Kwan and Sudakov~\cite{KKS2}.

\begin{theorem}
  \label{thm:matching}
  Let~$k\geq2$ be given and let $n\in k\bbN$.
  For every $\eps>0$, there is a constant $c = c(k, \eps) > 0$ such that if $p\geq cn^{1-k}$ and $H$ is an $n$-vertex $k$-graph with minimum vertex degree
  $\delta(H)\geq \eps\binom{n-1}{k-1}$, then a.a.s. $H\cup H^{(k)}(n,p)$ contains a perfect matching.
\end{theorem}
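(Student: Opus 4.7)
We apply the absorbing method. Split the random edges as $R = R_1 \cup R_2$, where $R_1$ and $R_2$ are independent copies of $H^{(k)}(n, p/2)$. For a $k$-set $S = \{v_1, \ldots, v_k\} \subseteq V$, call a disjoint $k$-set $A_S = \{w, u_1, \ldots, u_{k-1}\}$ an \emph{absorber} for $S$ if the three $k$-sets
\[
  \{w, u_1, \ldots, u_{k-1}\},\quad \{v_1, \ldots, v_{k-1}, w\},\quad \{v_k, u_1, \ldots, u_{k-1}\}
\]
all lie in $H \cup R_1$; then $A_S$ alone has the perfect matching given by the first edge, while $A_S \cup S$ has the perfect matching given by the other two. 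Using $\delta(H) \ge \varepsilon\binom{n-1}{k-1}$ together with $R_1$, one shows that every $k$-set $S$ has $\Omega_\varepsilon(n^k)$ absorbers in $H \cup R_1$ a.a.s.; the random edges in $R_1$ supply the required absorbers for those $k$-sets where $H$ alone is too sparse.

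By a standard random-sampling and concentration argument, the abundance of absorbers yields an \emph{absorbing matching} $M_{\mathrm{abs}} \subseteq H \cup R_1$ of size at most $\mu n$ with the property that, for every $L \subseteq V \setminus V(M_{\mathrm{abs}})$ with $|L| \le \eta n$ and $|L| \in k\bbZ$, the set $V(M_{\mathrm{abs}}) \cup L$ admits a perfect matching in $H \cup R_1$. Set $V' := V \setminus V(M_{\mathrm{abs}})$. A maximum matching in $R_2[V']$ leaves at most $e^{-\Omega(c)}|V'|$ vertices unmatched, reflecting (up to constants) the expected number of isolated vertices in $R_2[V']$, which is at most $\eta n$ for $c = c(k, \varepsilon)$ large enough. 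Since $n \in k\bbN$ and $|V(M_{\mathrm{abs}})| \in k\bbZ$, the leftover $L$ satisfies $|L| \in k\bbZ$, so applying the absorbing property of $M_{\mathrm{abs}}$ to $L$ completes a perfect matching of $H \cup R$.

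The main obstacle is the absorber-counting step. The minimum vertex degree condition is much weaker than any codegree hypothesis, so $H$ may be highly unbalanced — for instance, all edges of $H$ could meet a fixed set of $\varepsilon n$ vertices, leaving large independent sets in $H$. Consequently $H$ alone need not contain absorbers for every $k$-set, and the random edges in $R_1$ must do substantial work. Establishing the uniform lower bound of $\Omega(n^k)$ absorbers per $k$-set in $H \cup R_1$ with only $p = \Theta(n^{1-k})$ is the technical heart of the argument, and it is what drives the dependence $c = c(k, \varepsilon)$.
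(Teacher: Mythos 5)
Your absorber gadget is the classical three\-edge structure used for perfect matchings under a \emph{codegree} hypothesis, and it does not survive the passage to the vertex\-degree hypothesis of the theorem. Concretely, your absorber requires that $\{v_1,\dots,v_{k-1},w\}\in H\cup R_1$ for many choices of $w$. The number of such $w$ supplied by $H$ is exactly $\deg_H(\{v_1,\dots,v_{k-1}\})$, a $(k-1)$-degree, which is not controlled by $\delta_1(H)\ge\eps\binom{n-1}{k-1}$ and can be zero. The random graph cannot compensate: the expected number of $w$ with $\{v_1,\dots,v_{k-1},w\}\in R_1$ is about $np=cn^{2-k}=o(1)$ for $k\ge3$. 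A concrete obstruction: take $H=\bigcup_{i=1}^{m}K^{(k)}_{V_i}$ for an equitable partition $V=V_1\cup\dots\cup V_m$ with $m=\lceil \eps^{-1/(k-1)}\rceil$ (so $\delta_1(H)\ge\eps\binom{n-1}{k-1}$), and let $S$ be a transversal meeting $k$ distinct parts. Then \emph{every} $(k-1)$-subset of $S$ meets at least two parts, so has codegree $0$ in $H$, and no relabelling of $S$ helps. In this situation the expected number of your absorbers in $H\cup R_1$ is $o(1)$, not $\Omega(n^k)$. (For $k=2$ the $(k-1)$-degree is just the vertex degree and the gadget is fine, which is why this trap is easy to fall into.)

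The paper repairs this with a larger gadget: a \emph{simple $S$-absorber} on $k^2$ vertices $A=A_1\cup\dots\cup A_k$ (Definition~\ref{def:abs}), where for each $s_i\in S$ the edge $\{s_i\}\cup(A_i\setminus\{v_i^k\})$ is supplied by $H$ using only the \emph{vertex} degree of $s_i$, and the remaining $k+1$ required edges ($A_1,\dots,A_k$ and $A_{k+1}=\{v_1^k,\dots,v_k^k\}$) lie entirely on the $k^2$ free vertices, so each appears in $H^{(k)}(n,p)$ with probability $p$ and there is full $n$-fold freedom in choosing every free vertex. This gives $\Omega(c^{k+1}n)$ absorbers per $S$ (Lemma~\ref{lem:absa}), not $\Omega(n^{k})$ or $\Omega(n^{k^2})$. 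Because only linearly many absorbers per $S$ are available, the ``standard random-sampling and concentration'' route you invoke to build the absorbing structure also does not go through (a uniformly random family of $\Theta(n)$ candidate positions would typically hit none of the $\Theta(n)$ good positions out of $\Theta(n^{k^2})$); the paper instead builds the absorbing set deterministically from these linearly many absorbers via the Montgomery/Nenadov--Pehova bipartite template (Lemma~\ref{lem:NP} and Lemma~\ref{lem:abs}). Your second-round cleanup step is also replaced in the paper by the simpler observation (Lemma~\ref{lem:BHM}\ref{BHMi}) that a.a.s.\ every $\xi n$-subset of $H_2$ spans an edge, so a greedy matching leaves at most $\xi n$ uncovered vertices; the ``isolated vertices'' heuristic you sketch would need more care but is not the essential issue.
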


We shall derive Theorem \ref{thm:matching} from our result on general $F$-factors (see Theorem \ref{main} below).
To state our result we first introduce some notation.
For a $k$-graph $F$ and a vertex $v\in V(F)$, we define $F_v$ to be the spanning subgraph of $F$ consisting of the edges containing~$v$.
Let $\Lambda_{F,v}$ be the collection of $\a>0$ such that the following holds: for every $\eps>0$, there exist $\eps'>0$ and $n_0\in \bbN$ such that if $H$ is a $k$-graph with $n\geq n_0$ vertices and $\delta(H)\geq(\a+\eps)\binom{n-1}{k-1}$, then for every $w\in V(H)$ there are at least $\eps' n^{v_F-1}$
embeddings of~$F_v$ in~$H$ that map~$v$ to~$w$.
Let
\begin{equation}
  \label{eq:3}
  \a_F:=\min_{v\in V(F)} \inf\Lambda_{F,v}.
\end{equation}
(An alternative definition of~$\alpha_F$ is given in Section~\ref{sec:concluding}.)

We are now ready to state our result.

\begin{theorem}[Main result]
  \label{main}
  Let~$k\geq2$ be given.  Let $F$ be a $k$-graph with $e_F>0$ and let $n\in \bbN$ be divisible by $v_F$. For every $\eps>0$, there is a constant $c = c(k,\a_F,\eps) > 0$ such that if $p\geq cn^{-1/d^*(F)}$ and $H$ is an $n$-vertex $k$-graph with minimum vertex degree $\delta(H)\geq (\a_F+\eps)\binom{n-1}{k-1}$, then a.a.s. $H\cup H^{(k)}(n,p)$ contains an $F$-factor.
\end{theorem}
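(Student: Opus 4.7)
The plan is to employ the absorption method tailored to the randomly perturbed setting. Concretely, we aim to construct, essentially from the edges of $H$, a small \emph{absorbing set} $A \subseteq V(H)$ of size $o(n)$ with the property that for every $L \subseteq V(H) \setminus A$ with $|L| \leq \beta n$ and $v_F \mid |A|+|L|$, the hypergraph $H[A \cup L]$ admits an $F$-factor. Once $A$ is secured, we use the random edges of $H^{(k)}(n,p)$ restricted to $V(H) \setminus A$ to find an $F$-tiling covering all but at most $\beta n$ vertices, and then absorb the remaining vertices into $A$ to complete the $F$-factor of $H \cup H^{(k)}(n,p)$.

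For the absorbing set, the hypothesis $\delta(H) \geq (\a_F + \eps)\binom{n-1}{k-1}$ combined with the definition of $\a_F$ in~\eqref{eq:3} supplies, for some $v \in V(F)$ and every $w \in V(H)$, at least $\Omega(n^{v_F-1})$ embeddings of the star $F_v$ in $H$ that map $v$ to~$w$. Averaging across vertices, one obtains many pairs of distinct vertices $w, u$ that share a common $F_v$-star with image $S \subset V(H)$, so that both $\{u\}\cup S$ and $\{w\}\cup S$ span copies of $F$ in $H$. Such a configuration is a \emph{$w$-exchange gadget}: one may include either $\{u\}\cup S$ or $\{w\}\cup S$ in an $F$-tiling, effectively swapping $w$ for $u$. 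A probabilistic template construction along the lines of Montgomery's distributive absorber, or the framework of Lo--Markstr\"om, extracts a family of almost vertex-disjoint gadgets whose supports define the desired~$A$.

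For the almost-$F$-factor, the bound $p \geq c n^{-1/d^*(F)}$ is (up to a polylogarithmic factor) the Johansson--Kahn--Vu threshold for $F$-factors, and a standard nibble argument guarantees that $H^{(k)}(n,p)$ restricted to $V(H) \setminus A$ a.a.s.\ contains an $F$-tiling leaving only $o(n)$ vertices uncovered, provided $c$ is sufficiently large. Combining this tiling with the absorbing property of $A$ then closes the argument, since the number of leftover vertices can be made smaller than the $\beta n$ tolerated by~$A$.

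The main obstacle is the construction of the absorbing set under the weak minimum vertex degree condition. Unlike codegree settings, where every $(k-1)$-tuple already belongs to many copies of $F$, here one has only star-information around each vertex, and matching two stars into an exchange gadget requires that the partner vertex $u$ also extends the same set $S$ into a copy of $F$; the vertex role $v \in V(F)$ played by~$w$ may moreover vary with~$w$. Establishing robust lower bounds on the number of exchange gadgets centred at every vertex, and then running a template argument whose absorbing property survives any bounded linear leftover, is the heart of the proof and the step where the freedom in choosing $v$ in~\eqref{eq:3} must be exploited most carefully.
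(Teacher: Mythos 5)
There is a genuine gap at the heart of your proposal: you plan to build the absorbing set ``essentially from the edges of~$H$,'' and in particular you assert that from many $F_v$-stars at~$w$ and~$u$ with the same image~$S$ ``both $\{u\}\cup S$ and $\{w\}\cup S$ span copies of~$F$ in~$H$.'' This does not follow. The definition of $\a_F$ and the bound $\delta(H)\geq(\a_F+\eps)\binom{n-1}{k-1}$ only supply many embeddings of $F_v$ (the spanning subgraph of~$F$ consisting of the edges through~$v$); they say nothing about the edges of~$F$ avoiding~$v$, which need not be present in~$H$ at all. Indeed, $H$ may contain \emph{no} copy of~$F$ whatsoever: take $k=2$, $F=K_3$, and $H$ bipartite of linear minimum degree (here $\a_F=0$). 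So the ``exchange gadgets'' you describe cannot be produced inside~$H$, and the absorbing-set construction collapses before it starts.

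The missing idea is exactly the two-step strategy the degree condition is designed to feed: use $H$ to find the $F_v$-stars, and then use the random edges of~$H^{(k)}(n,p)$ to complete each star to a full copy of~$F$. Concretely, the paper's absorbers (simple $S$-absorbers, Lemma~\ref{lem:absa}) live in $H\cup H^{(k)}(n,p)$, not in~$H$: one fixes a candidate $b^2$-set~$B$ whose $F_v$-stars toward~$S$ already sit in~$H$, and then lets $H^{(k)}(n,p)$ supply a gadget $A_B\in\A(F)$ (a union of $b+1$ copies of~$F$ glued along single vertices) to turn~$B$ into an absorber; Lemmas~\ref{lem:PhiA}--\ref{lem:PhiF} show $\Phi_{A_B}\geq cn$ at $p\geq cn^{-1/d^*(F)}$, and Lemma~\ref{lem:BHM}\,\ref{BHMii} then gives concentration strong enough to beat the union bound over all $S$ and all $W$. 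Relatedly, your proposal only invokes the random edges for the almost-tiling; to make the proof go through one must instead expose $H^{(k)}(n,p)$ in two rounds, one for the absorbers and one for the almost-tiling. The template/Montgomery-type step you sketch (Lemma~\ref{lem:NP}) is the right high-level tool for assembling the absorbing set once the local absorbers exist, and the almost-tiling step (your ``nibble'' vs.\ the paper's greedy use of Lemma~\ref{lem:BHM}\,\ref{BHMi}) is a minor variation, but the base gadget construction needs the random graph.
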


Theorem~\ref{main} shows that, starting with a $k$-graph~$H$ that is dense only in the sense of having a lower bound on its minimum vertex degree, the number of random edges needed to be added to~$H$ in order to force an $F$-factor with high probability is, for certain~$F$, reduced by a polylogarithmic factor (recall~\eqref{eq:factor_threshold}).

In the case in which~$\alpha_F=0$, the condition on the minimum degree of~$H$ as well as the lower bound on~$p$ in Theorem~\ref{main} are optimal.
This is made more precise in Section~\ref{sec:question_from_BTW}.

By the supersaturation result of Erd\H{o}s and Simonovits~\cite[Corollary 2]{Erdos}, we have~$\a_F=0$ if and only if there exists $v\in V(F)$ such that the link $(k-1)$-graph of $v$ is $(k-1)$-partite, which is the case when $F$ is a single edge, a $k$-partite $k$-graph, $K_4^{(3)-}$ (the $3$-graph with~$4$ vertices and~$3$ edges), the Fano plane, etc.
In particular,
\begin{itemize}
\item taking $k=2$, since $F_v$ is a star for any $F$, we derive $\alpha_F=0$ and recover Theorem~\ref{thm:BTW};
\item since $d^*(F)=1/(k-1)$ when~$F$ is a single $k$-edge, Theorem~\ref{main} reduces to Theorem~\ref{thm:matching}.
\end{itemize}

\subsection{Proof ideas and organisation}
To have an $F$-factor in~$H\cup H^{(k)}(n,p)$, we clearly have to be above the \emph{covering threshold}: every $w\in V(H)$ has to be covered by a copy of~$F$ in~$H\cup H^{(k)}(n,p)$.
The definition of~$\alpha_F$ and the minimum degree condition on~$H$ tells us that every~$w\in V(H)$ is covered by many copies of~$F_v$ in~$H$ (for the
`cheapest' $v\in V(F)$).
This suggests the following two-step strategy for covering a vertex $w\in V(H)$ by a copy of $F$ in~$H\cup H^{(k)}(n,p)$: we use~$H$ to cover~$w$ by a copy of $F_v$ and then use the random edges of~$H^{(k)}(n,p)$ to fill the missing edges to yield a copy of~$F$.

Our proof uses the ``absorption technique'' pioneered by R\"{o}dl, Ruci\'nski and Szemer\'{e}di~\cite{Rodl1}, combined with some results concerning binomial random hypergraphs.
The key step is to build absorbers for an arbitrary set of $v_F$ vertices and for this we use the two-step strategy mentioned above.

The rest of the paper is organised as follows.
In Section~\ref{s2} we prove some useful results concerning binomial random $k$-graphs.
In Section~\ref{sec:question_from_BTW} we discuss the question raised in~\cite{BTW} mentioned above and we discuss the optimality of Theorem~\ref{main} when~$\alpha_F=0$.
In Section~\ref{s3} we prove our absorbing lemma and the proof of Theorem~\ref{main} is given in Section~\ref{s4}.
Some remarks concerning the parameter~$\alpha_F$ are given in Section~\ref{sec:concluding}.

Throughout the rest of the paper, $k$~denotes an integer with~$k\geq2$ and will be omitted from the dependencies of constants in both the
results and proofs.
As usual, for an integer~$b$, let~$[b]=\{1,\dots,b\}$.
For simplicity, we omit floor and ceiling signs when they are not essential.

\section{Preliminaries}
\label{s2}

In this section we discuss some relevant results related to binomial random $k$-graphs and introduce some notation used throughout the paper.

Given a $k$-graph $H$ and a subset $S\subseteq V(H)$, we write $H[S]$ for the subgraph of $H$ induced by $S$.
We write $x\ll y\ll z$ to mean that we can choose constants from right to left, that is, there exist functions $f$ and $g$ such that, for any $z>0$, whenever $y\leq f(z)$ and $x\leq g(y)$, the subsequent statement holds.
Statements with more variables are defined similarly.
We assume that $n$ is sufficiently large, unless stated otherwise.

Consider the random $k$-graph $H^{(k)}(n, p)$ with $p=p(n)$.
Following~\cite{Janson}, for a fixed $k$-graph~$F$, we let
\[
  \Phi_F=\Phi_F(n,p)
  :=\min\left\{n^{v_{F'}}p^{e_{F'}}:\,F'\subseteq F,\ e_{F'}>0\right\}.
\]
We shall need the following result, which follows directly from \cite[Proposition~2.1]{BHKM} and Chebyshev's inequality.

\begin{proposition}[{\cite[Proposition 2.1]{BHKM}}]\label{prop:BHKM}
  Let $F$ be a $k$-graph with $s$ vertices and $f$ edges and let $H = H^{(k)}(n, p)$.
  Let $\A$ be a family of ordered $s$-subsets of $V = V(H)$.
  For each $A\in\A$, let~$I_A$ be the indicator random variable of the event that $A$ spans a labelled copy of $F$ in $H$.
  Let $X=\sum_{A\in \A}I_A$.
  Then $\mathbb{P}\left[X\geq 2\mathbb{E}(X)\right]\leq s!2^{2s}n^{2s}p^{2f}/(\mathbb{E}(X)^2\Phi_F)$.
\end{proposition}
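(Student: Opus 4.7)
The plan is to derive this statement from Chebyshev's inequality together with the variance estimate of~\cite[Proposition~2.1]{BHKM}. Note first that the event $X \ge 2\mathbb{E}(X)$ implies $|X - \mathbb{E}(X)| \ge \mathbb{E}(X)$, so Chebyshev gives
$$
\mathbb{P}[X \ge 2\mathbb{E}(X)] \le \frac{\mathrm{Var}(X)}{\mathbb{E}(X)^2}.
$$
Hence the task reduces to establishing the variance bound $\mathrm{Var}(X) \le s!\,2^{2s}\,n^{2s}p^{2f}/\Phi_F$.

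To this end I would expand $\mathrm{Var}(X) = \sum_{A,B \in \A}\mathrm{Cov}(I_A,I_B)$ and observe that each $I_A$ is the indicator of an increasing event in the product probability space of the random $k$-edges, so the Harris/FKG inequality yields $\mathrm{Cov}(I_A,I_B) \ge 0$ for every pair. Consequently, enlarging~$\A$ to the family~$\A^*$ of all ordered $s$-tuples of distinct vertices of~$V$ only increases the variance, so it suffices to prove the bound with~$\A^*$ in place of~$\A$. The variance of $X^* = \sum_{A \in \A^*}I_A$, which counts labelled copies of~$F$ in~$H$, is exactly the quantity estimated in~\cite[Proposition~2.1]{BHKM}, and the required inequality is its conclusion.

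For intuition, the variance calculation underlying~\cite[Proposition~2.1]{BHKM} is a standard second moment computation: $\mathrm{Cov}(I_A, I_B)$ vanishes unless the edge sets required by~$A$ and~$B$ share an edge, in which case the shared edges span some subgraph $F' \subseteq F$ with $e_{F'} \ge 1$, and $\mathbb{E}(I_A I_B) = p^{2f - e_{F'}}$. Grouping the pairs by their overlap type~$F'$, the number of pairs with a given~$F'$ is at most $s!\,n^{2s-v_{F'}}$ (pick $A$ in at most~$n^s$ ways, choose which $v_{F'}$ positions of~$B$ meet~$A$ and how they correspond, then fill in the remaining $s - v_{F'}$ coordinates of~$B$). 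Summing over the at most~$2^s$ relevant subgraphs~$F'$ and invoking the definition of~$\Phi_F$ produces the bound $s!\,2^{2s}\,n^{2s}p^{2f}/\Phi_F$.

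I do not expect any serious obstacle; the only point that requires care is the pair-counting with prescribed overlap and the bookkeeping of the constants~$s!$ and~$2^{2s}$. Once the variance bound is in hand, Chebyshev closes the argument immediately.
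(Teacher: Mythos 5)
Your proposal is correct and follows essentially the same route as the paper, which justifies the proposition by combining Chebyshev's inequality with the variance bound of \cite[Proposition~2.1]{BHKM}. Your appeal to Harris/FKG to pass from $\A$ to all ordered $s$-tuples is a slight overcomplication---since only overlapping pairs contribute to the covariance sum and each such $\mathrm{Cov}(I_A,I_B)\le \mathbb{E}(I_AI_B)$, one can simply enlarge the index set of that nonnegative sum directly---but this does not affect the correctness of the argument.
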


The next proposition gives a lower bound for $\Phi_F$ for some
specific $k$-graphs $F$.
Given labelled $k$-graphs $F_1$ and $F_2$, we denote by $F_1\cup F_2$
the $k$-graph with vertex set $V(F_1)\cup V(F_2)$ and edge set
$E(F_1)\cup E(F_2)$.

\begin{proposition}\label{lem:F12}
  Let $F_1$ and $F_2$ be labelled $k$-graphs with $V(F_1)\cap V(F_2)=\{v\}$.
  Then $\Phi_{F_1\cup F_2}\geq \min\{\Phi_{F_1},\Phi_{F_2},\Phi_{F_1}\Phi_{F_2} n^{-1}\}$.
\end{proposition}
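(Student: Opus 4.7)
The plan is to unpack the definition of $\Phi$ and show that any nonempty subgraph $F' \subseteq F_1 \cup F_2$ contributes a value $n^{v_{F'}} p^{e_{F'}}$ that is at least one of the three quantities in the minimum. Taking the overall minimum over $F'$ then yields the bound.

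First I would reduce to the case where $F'$ has no isolated vertices: removing an isolated vertex decreases $v_{F'}$ by one and leaves $e_{F'}$ unchanged, so the value $n^{v_{F'}} p^{e_{F'}}$ only shrinks. Hence the minimum defining $\Phi_{F_1 \cup F_2}$ is attained on a subgraph without isolated vertices, and it suffices to bound those.

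Next I would partition the edges of $F'$: write $E_i := E(F') \cap E(F_i)$ for $i = 1,2$, and let $V_i$ be the set of endpoints of edges in $E_i$, viewed as a subset of $V(F_i)$. Because $V(F_1) \cap V(F_2) = \{v\}$, we have $V_1 \cap V_2 \subseteq \{v\}$, and since $F'$ has no isolated vertices, $V(F') = V_1 \cup V_2$. Now I split into two cases. If one of $E_1, E_2$ is empty, say $E_2 = \emptyset$, then every edge of $F'$ lies in $F_1$, and the no-isolated-vertices assumption forces $V(F') \subseteq V(F_1)$; so $F' \subseteq F_1$ with $e_{F'} > 0$, giving $n^{v_{F'}} p^{e_{F'}} \geq \Phi_{F_1}$. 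If both $E_1$ and $E_2$ are nonempty, set $F_i' := (V_i, E_i) \subseteq F_i$. Then $e_{F'} = e_{F_1'} + e_{F_2'}$ and $v_{F'} = |V_1| + |V_2| - |V_1 \cap V_2| \geq v_{F_1'} + v_{F_2'} - 1$, so
\[
  n^{v_{F'}} p^{e_{F'}} \;\geq\; \frac{\bigl(n^{v_{F_1'}} p^{e_{F_1'}}\bigr)\bigl(n^{v_{F_2'}} p^{e_{F_2'}}\bigr)}{n} \;\geq\; \frac{\Phi_{F_1}\Phi_{F_2}}{n}\,,
\]
using $e_{F_i'} > 0$ in the last step.

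In either case $n^{v_{F'}} p^{e_{F'}} \geq \min\{\Phi_{F_1}, \Phi_{F_2}, \Phi_{F_1}\Phi_{F_2} n^{-1}\}$, and minimising over $F'$ gives the claimed lower bound on $\Phi_{F_1 \cup F_2}$. I do not expect a serious obstacle: the only subtlety is the bookkeeping in the definition of $V_i$ and the observation that the value of interest is monotone under removing isolated vertices, both of which are routine.
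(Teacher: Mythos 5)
Your proof is correct and takes essentially the same approach as the paper's: split the edges of $F'$ between $F_1$ and $F_2$ and combine $v_{F'}\geq v_{F_1'}+v_{F_2'}-1$ with $e_{F'}=e_{F_1'}+e_{F_2'}$. Your preliminary reduction to subgraphs without isolated vertices is a worthwhile refinement: in the paper's version, where $v_i=|V(F')\cap V(F_i)|$ and $e_i=|E(F')\cap E(F_i)|$, one can have $v_i>0$ with $e_i=0$ (if $F'$ has an isolated vertex in $V(F_i)\setminus V(F_{3-i})$), in which case the implicit step $n^{v_i}p^{e_i}\geq\Phi_{F_i}$ is not justified; your reduction forces $e_{F_1'},e_{F_2'}>0$ in the second case and so closes that small gap.
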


\begin{proof}
  Let $F'$ be a subgraph of $F_1\cup F_2$ with $v'$ vertices and $e'$ edges, where $e'>0$.
  Let $v_i:=|V(F')\cap V(F_i)|$ and $e_i:=|E(F')\cap E(F_i)|$ for $i\in\{1,2\}$.
  If $V(F')\subseteq V(F_i)$ for some $i\in\{1,2\}$, then
  \[
    n^{v'}p^{e'}=n^{v_i}p^{e_i}\geq \Phi_{F_i};
  \]
  otherwise, we have $V(F')\cap V(F_i)\neq \emptyset$ for $i\in\{1,2\}$.
  Since $v'\geq v_1 + v_2 -1$ and~$e'=e_1+e_2$, we have
  \[
    n^{v'}p^{e'}\geq
    n^{v_1+v_2-1}p^{e_1+e_2}=(n^{v_1}p^{e_1})(n^{v_2}p^{e_2})n^{-1}\geq
    \Phi_{F_1}\Phi_{F_2} n^{-1}.
  \]
  This shows that $\Phi_{F_1\cup F_2}\geq \min\{\Phi_{F_1},\Phi_{F_2},\Phi_{F_1}\Phi_{F_2} n^{-1}\}$.
\end{proof}

Let $F$ be a labelled $k$-graph with $b$ vertices.
Let $A$ be a $k$-graph composed of copies $F'$, $F_1,\dots, F_b$ of
$F$ such that $V(F_1),\ldots, V(F_b)$ are pairwise disjoint
and~$|V(F')\cap V(F_i)|=1$ for every~$i\in[b]$.
Denote by $\A(F)$ the collection of all $k$-graphs~$A$ defined this way.
The next result bounds~$\Phi_A$ for every $A\in\A(F)$.

\begin{lemma}
  \label{lem:PhiA}
  Let $c\geq1$ be given and let $F$ be a labelled $k$-graph with $b$ vertices.
  If $p=p(n)$ is such that $\Phi_F\geq cn$, then $\Phi_A\geq cn$ for every $A\in\A(F)$.
\end{lemma}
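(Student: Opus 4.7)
The plan is to build $A$ up one leaf-copy at a time, gluing each $F_i$ to the central copy $F'$ along a single vertex, and applying Proposition~\ref{lem:F12} at every step. The key quantitative observation is that since $c\geq1$ and $\Phi_F\geq cn$, whenever I take the minimum
\[
\min\{\Phi_X,\Phi_F,\Phi_X\Phi_F/n\}
\]
with $\Phi_X\geq cn$, the third term satisfies $\Phi_X\Phi_F/n\geq (cn)(cn)/n=c^2n\geq cn$. So the constraint $c\geq 1$ (rather than $c>0$) is precisely what is needed to keep the mixed term from being the bottleneck, and the minimum inherits the lower bound $cn$ from the first two arguments.

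Formally, I would argue by induction on $j$ that, setting $A_0:=F'$ and $A_j:=A_{j-1}\cup F_j$ for $1\leq j\leq b$, one has $\Phi_{A_j}\geq cn$. The base case $j=0$ is just the hypothesis $\Phi_{A_0}=\Phi_{F'}=\Phi_F\geq cn$. For the inductive step, by definition of $\A(F)$ the vertex sets satisfy $V(A_{j-1})\cap V(F_j)=\{v_j\}$ for a single vertex $v_j\in V(F')$ (since $F_1,\dots,F_b$ are pairwise vertex-disjoint and each meets $F'$ in exactly one vertex). Hence Proposition~\ref{lem:F12} is applicable to the pair $(A_{j-1},F_j)$, and yields
\[
\Phi_{A_j}\;\geq\;\min\bigl\{\Phi_{A_{j-1}},\,\Phi_{F_j},\,\Phi_{A_{j-1}}\Phi_{F_j}/n\bigr\}.
\]
By the inductive hypothesis $\Phi_{A_{j-1}}\geq cn$, by hypothesis $\Phi_{F_j}=\Phi_F\geq cn$, and by the computation above, the third term is at least $c^2n\geq cn$. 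Therefore $\Phi_{A_j}\geq cn$, completing the induction. Taking $j=b$ gives $\Phi_A=\Phi_{A_b}\geq cn$, as required.

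I do not foresee any real obstacle here; the lemma is essentially a bookkeeping consequence of iterating Proposition~\ref{lem:F12}. The only mild subtlety worth flagging is that Proposition~\ref{lem:F12} requires the two $k$-graphs to intersect in exactly one vertex, which is satisfied at each inductive step thanks to the pairwise disjointness of $V(F_1),\dots,V(F_b)$; otherwise the argument is a direct computation driven by the assumption $c\geq 1$.
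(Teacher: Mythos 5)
Your proof is correct and is essentially identical to the paper's: both build $A$ by iteratively attaching $F_1,\dots,F_b$ to $F'$ and invoke Proposition~\ref{lem:F12} at each step, using $c\geq1$ to ensure the mixed term $\Phi_{A_{j-1}}\Phi_{F_j}/n\geq c^2n\geq cn$ never becomes the bottleneck. The only difference is cosmetic indexing ($A_0=F'$ versus the paper starting at $A_1=F_1\cup F'$); you also correctly verify the single-vertex-intersection hypothesis of Proposition~\ref{lem:F12} at each step, which the paper leaves implicit.
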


\begin{proof}
  Fix~$A\in \A(F)$, and let $F'$, $F_1,\dots,F_b$ be as in the
  definition of~$\A(F)$ above.  By assumption we have
  $\Phi_{F'}=\Phi_{F_1}=\dots=\Phi_{F_b}\geq cn$.
  Let~$A_i=F_i\cup\dots\cup F_1\cup F'$  for every~$i\in[b]$.  Note
  that~$A=A_b$.  As~$c\geq1$, Proposition~\ref{lem:F12} tells us that
  $\Phi_{F_1\cup F'}\geq cn$.  Similarly, a simple induction tells us
  that~$\Phi_{A_i}\geq cn$ for every~$2\leq i\leq b$ and our lemma
  follows.
\end{proof}

The following simple lemma provides a lower bound on $p$ which ensures that the condition for $p$ in Lemma \ref{lem:PhiA} holds.

\begin{lemma}
  \label{lem:PhiF}
  Let $c\geq1$ be given and let $F$ be a labelled $k$-graph.
  If $p=p(n)\geq cn^{-1/d^*(F)}$, then $\Phi_F\geq cn$.
\end{lemma}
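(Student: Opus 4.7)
The plan is to unfold the two definitions and check the inequality subgraph by subgraph. Fix any subgraph $F'\subseteq F$ with $e_{F'}>0$; we must show $n^{v_{F'}}p^{e_{F'}}\geq cn$, and then take the minimum over $F'$ to conclude $\Phi_F\geq cn$.

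First I would note that $e_{F'}>0$ forces $v_{F'}\geq k\geq 2$, so $F'$ is an admissible candidate in the definition of $d^*(F)$. By definition, $e_{F'}/(v_{F'}-1)\leq d^*(F)$, which rearranges to $e_{F'}/d^*(F)\leq v_{F'}-1$.

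Next, substitute the hypothesis $p\geq cn^{-1/d^*(F)}$ into $n^{v_{F'}}p^{e_{F'}}$:
\[
n^{v_{F'}}p^{e_{F'}}\geq n^{v_{F'}}\bigl(cn^{-1/d^*(F)}\bigr)^{e_{F'}}=c^{e_{F'}}\,n^{v_{F'}-e_{F'}/d^*(F)}.
\]
Using the previous inequality, the exponent satisfies $v_{F'}-e_{F'}/d^*(F)\geq 1$, so $n^{v_{F'}-e_{F'}/d^*(F)}\geq n$. Since $c\geq 1$ and $e_{F'}\geq 1$, we have $c^{e_{F'}}\geq c$, and the product is at least $cn$ as required.

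Taking the minimum over all $F'\subseteq F$ with $e_{F'}>0$ yields $\Phi_F\geq cn$. The proof is essentially a one-line calculation once the definitions are unpacked, so there is no serious obstacle; the only point to be careful about is that $e_{F'}>0$ guarantees $v_{F'}\geq 2$, which legitimises the use of the bound $e_{F'}/(v_{F'}-1)\leq d^*(F)$.
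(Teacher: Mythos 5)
Your proof is correct and is essentially the same calculation as the paper's: both substitute $p\geq cn^{-1/d^*(F)}$ and use $d^*(F)\geq e_{F'}/(v_{F'}-1)$ to show the exponent of $n$ is at least $1$, then use $c\geq 1$, $e_{F'}\geq 1$ to finish. The only cosmetic difference is that the paper bounds $n^{-1/d^*(F)}\geq n^{-(v_{F'}-1)/e_{F'}}$ before expanding, while you expand first and then bound the exponent.
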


\begin{proof}
  Let $F'$ be an arbitrary subgraph of $F$ with $v'$ vertices and $e'>0$ edges.
  Note that, from the definition of $d^*(F)$, we have $d^*(F)\geq e'/(v'-1)$.
  Then, we have
  \[
    n^{v'}p^{e'}\geq n^{v'}(cn^{-1/d^*(F)})^{e'}\geq
    n^{v'}(cn^{-(v'-1)/e'})^{e'}=c^{e'}n\geq cn. \qedhere
  \]
\end{proof}

We shall use the following result of~\cite{BHKM}, which ensures the existence of an almost $F$-factor in the random hypergraph $H^{(k)}(n,p)$, for any fixed $k$-graph $F$ and $p=p(n)$ such that $\Phi_{F}\geq cn$.

\begin{lemma}[\cite{BHKM}, part of Lemma 2.2]
  \label{lem:BHM}
  Let $F$ be a labelled $k$-graph with $b$ vertices and $f$ edges.
  Suppose $1/n\ll1/c\ll \l,\,1/b,\,1/f$.
  Let $V$ be an $n$-vertex set, and let $\F$ be a family of $\l n^b$ ordered $b$-subsets of $V$.
  If $p=p(n)$ is such that $\Phi_{F}(n,p)\geq cn$, then the following properties hold for the binomial random $k$-graph $H = H^{(k)}(n, p)$ on $V$.
  \begin{enumerate}[label=\rmlabel]
  \item \label{BHMi} With probability at least $1-\exp(-n)$, every induced subgraph of $H$ of order $\l n$ contains a copy of $F$.
  \item \label{BHMii} With probability at least $1-\exp(-n)$, there are at least $(\l/2)n^bp^f$ ordered $b$-sets in $\F$ that span labelled copies of $F$.
\end{enumerate}
\end{lemma}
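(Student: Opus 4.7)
The plan is to derive both \ref{BHMi} and \ref{BHMii} from a single application of Janson's (lower-tail) inequality to the count of labelled copies of $F$ supported by a prescribed family of ordered $b$-tuples. Fix such a family $\B$ of ordered $b$-subsets of $V$ and set $X_\B := \sum_{A\in\B} I_A$, where $I_A$ is the indicator that the random edges of $H=H^{(k)}(n,p)$ realise a labelled copy of $F$ on $A$; thus $\mathbb{E}[X_\B] = |\B|\,p^f$. The key quantity is the pair-sum
\[
  \Delta_\B := \sum \mathbb{E}[I_{A_1} I_{A_2}],
\]
taken over ordered pairs of distinct $A_1,A_2\in\B$ whose $F$-supports share at least one edge. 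Classifying such pairs by the isomorphism type of the common subgraph $F'\subsetneq F$ with $e_{F'}\geq1$ and using the definition of $\Phi_F$ yields $\Delta_\B \leq C(b,f)\,\mathbb{E}[X_\B]^2/\Phi_F$, by essentially the same bookkeeping that underlies Proposition~\ref{prop:BHKM}.

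Plugging this bound into Janson's inequality gives, for some $\gamma=\gamma(b,f)>0$,
\[
  \mathbb{P}\bigl[X_\B \leq \tfrac{1}{2}\mathbb{E}[X_\B]\bigr] \leq \exp\!\left(-\frac{\mathbb{E}[X_\B]^2}{8(\mathbb{E}[X_\B]+\Delta_\B)}\right) \leq \exp\bigl(-\gamma\min(\mathbb{E}[X_\B],\Phi_F)\bigr).
\]
For \ref{BHMii} I would take $\B=\F$, for which $\mathbb{E}[X_\F] = \l n^bp^f \geq \l\Phi_F \geq \l cn$, so that the failure probability is at most $\exp(-\gamma c\min(\l,1)\,n) \leq \exp(-n)$, the last step using $1/c\ll\l$. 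For \ref{BHMi} I would take $\B=\B(W)$, the family of ordered $b$-tuples of distinct vertices of a prescribed $W\subseteq V$ with $|W|=\l n$; then $|\B(W)|\geq(\l n/2)^b$ for large $n$, whence $\mathbb{E}[X_{\B(W)}]\geq (\l/2)^b cn$, and the same bound becomes $\exp(-\gamma' cn)$ for some $\gamma'=\gamma'(b,f,\l)>0$. Since there are at most $\binom{n}{\l n}\leq 2^n$ choices of $W$, a union bound yields probability at most $2^n\exp(-\gamma' cn)\leq\exp(-n)$ of any bad $W$, provided the hierarchy $1/c\ll\l,1/b,1/f$ is used to secure $\gamma' c\geq \log 4$.

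The one genuinely technical point is the variance-type estimate $\Delta_\B\leq C(b,f)\,\mathbb{E}[X_\B]^2/\Phi_F$. This is a routine but notationally heavy calculation: enumerate pairs of labelled copies of $F$ supported on $\B$ by the subgraph $F'\subseteq F$ formed from their shared vertices and edges, and bound the contribution of each $F'$ with $e_{F'}\geq1$ by a quantity of the form $|\B|^2 n^{-v_{F'}}p^{2f-e_{F'}}\leq\mathbb{E}[X_\B]^2/\Phi_F$, then sum over the bounded number of such $F'$. Once this estimate is in hand, the rest is just plugging $\Phi_F\geq cn$ into Janson and choosing $c$ large enough to absorb the union-bound loss in \ref{BHMi}.
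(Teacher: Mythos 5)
This lemma is cited from~\cite{BHKM} and not proved in the present paper, so the relevant comparison is against the standard proof technique for such statements, which is indeed Janson's lower-tail inequality for subgraph counts; your overall approach is correct and is essentially the intended one.

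One small inaccuracy deserves flagging. In your sketch of the variance-type estimate you claim that the contribution of a fixed nonempty $F'\subseteq F$ is bounded by $|\B|^2 n^{-v_{F'}}p^{2f-e_{F'}}$, and hence that $\Delta_\B\leq C(b,f)\,\mathbb{E}[X_\B]^2/\Phi_F$ with a constant depending only on~$b$ and~$f$. The correct per-$F'$ bound is $|\B|\,n^{b-v_{F'}}p^{2f-e_{F'}}$: fix~$A_1$ ($\leq|\B|$ choices), fix the positions in $A_1$ and $A_2$ realising the overlap~$F'$ (a bounded number of choices), and fill in the remaining $b-v_{F'}$ coordinates of $A_2$ in at most $n^{b-v_{F'}}$ ways. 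Since $|\B|\leq n^b$ one has $|\B|\,n^{b-v_{F'}}\geq|\B|^2 n^{-v_{F'}}$, so your stated quantity is a \emph{lower} bound on the correct upper bound, not an upper bound. Rewriting,
\[
  |\B|\,n^{b-v_{F'}}p^{2f-e_{F'}}
  \;=\;\bigl(|\B|\,p^f\bigr)\,\frac{n^b p^f}{n^{v_{F'}}p^{e_{F'}}}
  \;\leq\;\mathbb{E}[X_\B]\cdot\frac{n^b p^f}{\Phi_F}
  \;=\;\frac{n^b}{|\B|}\cdot\frac{\mathbb{E}[X_\B]^2}{\Phi_F},
\]
which, for $|\B|\geq\l n^b$, yields $\Delta_\B\leq C(b,f)\,\mathbb{E}[X_\B]^2/(\l\,\Phi_F)$, with an extra factor $1/\l$. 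Consequently the Janson exponent should really be $\gamma(b,f,\l)\min(\mathbb{E}[X_\B],\Phi_F)$ rather than $\gamma(b,f)\min(\cdot,\cdot)$. This does not damage the argument: in part \ref{BHMii} you already use the hierarchy $1/c\ll\l$ to absorb the factor $\l$, and in part \ref{BHMi} you correctly allow $\gamma'$ to depend on $\l$. I would simply correct the claim about the per-$F'$ term and make the $\l$-dependence of the constants explicit throughout, so that it is clear the hierarchy $1/c\ll\l,1/b,1/f$ is what rescues the final bounds.

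One further small clarification that would strengthen the write-up: in part \ref{BHMi} you only need $X_{\B(W)}>0$, so you may prefer to apply Janson with $t=\mu$ rather than $t=\mu/2$; this is cosmetic, and both choices give the exponential decay you need. Apart from the bookkeeping issue above, the decomposition, the union bound over $W$, and the use of the hierarchy to dominate the $2^n$ factor are all correct.
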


\section{Optimality of Theorem~\ref{main} and a question
  from~\cite{BTW}}
\label{sec:question_from_BTW}

\subsection{On the optimality of Theorem~\ref{main} when~$\alpha_F=0$}
Let us show that, in Theorem~\ref{main}, we need to have $p=\Omega(n^{-1/d^*(F)})$ when $\a_F=0$ (this has already been observed in the case~$k=2$ in~\cite{BTW}).
In the next section, we compare our model with the purely random model
in the context of Theorem~\ref{main}.
We show that we need a bound of the form $\delta(H)=\Omega(n^{k-1})$ and the fact that~$F$ is strictly balanced in order to see a substantial difference in the behaviour of the two models.

Fix a $k$-graph $F$ with $\a_F=0$.
We exhibit a sequence of $n$-vertex $k$-graphs $H_n$ with minimum vertex degree $\Omega(n^{k-1})$ such that, if $p \ll n^{-1/d^*(F)}$, then a.a.s.\ $H_n\cup H^{(k)}(n,p)$ does not contain an $F$-factor.
We shall need the following result.

\begin{proposition}
  \label{prop2}
  Let $F$ be a $k$-graph with $e_F>0$.
  For every $\theta>0$, there is a positive constant $c = c(\theta,F)$ such that if $p\leq cn^{-1/d^*(F)}$, then
  \[ \lim_{n\to\infty}\mathbb{P}\left[H^{(k)}(n,p)
      \mbox{ contains an $F$-tiling covering at least } \theta n \mbox{
        vertices}\right] = 0.
  \]
\end{proposition}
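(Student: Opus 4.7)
The plan is to apply a standard first moment argument, but to a carefully chosen subgraph of $F$ rather than to $F$ itself. Since $e_F>0$, the maximum in the definition of $d^*(F)$ is attained by some subgraph $F^*\subseteq F$ with $v_{F^*}\ge 2$ and $e_{F^*}>0$, so that $e_{F^*}=d^*(F)(v_{F^*}-1)$. The key observation is that any $F$-tiling covering at least $\theta n$ vertices consists of at least $t:=\theta n/v_F$ pairwise vertex-disjoint copies of $F$, and each copy of $F$ contains at least one (not necessarily induced) copy of $F^*$. Hence the existence of such a tiling in $H^{(k)}(n,p)$ forces the existence of a family of $t$ pairwise vertex-disjoint copies of $F^*$ in $H^{(k)}(n,p)$.

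It therefore suffices to show that, for $c=c(\theta,F)$ sufficiently small, a.a.s. no such family exists. Let $X$ count the unordered $t$-element families of pairwise vertex-disjoint copies of $F^*$ in $H^{(k)}(n,p)$. Since the number of copies of $F^*$ in $K_n^{(k)}$ is at most $n^{v_{F^*}}$, I bound
\[
  \mathbb{E}[X] \,\le\, \frac{n^{v_{F^*}t}}{t!}\, p^{e_{F^*}t}.
\]
Plugging in $p\le cn^{-1/d^*(F)}$ and using $e_{F^*}=d^*(F)(v_{F^*}-1)$ gives $n^{v_{F^*}}p^{e_{F^*}}\le c^{e_{F^*}}n$, so a routine Stirling estimate ($t!\ge (t/e)^t$, $n/t=v_F/\theta$) yields
\[
  \mathbb{E}[X]\,\le\, \frac{(c^{e_{F^*}}n)^t}{t!} \,\le\, \left(\frac{e\, c^{e_{F^*}} v_F}{\theta}\right)^{t}.
\]
Choosing $c$ small enough that the base is strictly less than, say, $1/2$, makes $\mathbb{E}[X]$ decay exponentially, and Markov's inequality gives $\mathbb{P}[X\ge 1]=o(1)$.

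The only genuine insight is the choice of $F^*$: running the same first-moment argument directly on $F$ would give a bound of the form $(c^{e_F}n^{v_F-e_F/d^*(F)}\cdot ev_F/(\theta n))^t$, which is useless whenever $d^*(F)>e_F/(v_F-1)$ (i.e.\ whenever $F$ is not itself the densest subgraph realizing $d^*(F)$). Once one counts disjoint copies of the densest subgraph $F^*$, the exponent $v_{F^*}-e_{F^*}/d^*(F)$ collapses to exactly $1$ and everything aligns. Beyond this observation I do not expect any serious obstacle, and I would not anticipate needing any further probabilistic tool than Markov's inequality.
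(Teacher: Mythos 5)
Your proof is correct and hinges on the same key insight as the paper's: pass from $F$ to a subgraph $F^*$ achieving the maximum in the definition of $d^*(F)$, so that the exponent $v_{F^*}-e_{F^*}/d^*(F)$ collapses to exactly~$1$ (the paper calls this subgraph~$J$). Where you diverge is in the counting step. The paper bounds the \emph{total} number of copies of $J$ in $H^{(k)}(n,p)$: it computes $\mathbb{E}[X]=n(n-1)\cdots(n-s+1)p^{j}$, applies a Chebyshev-type concentration bound (Proposition~\ref{prop:BHKM}) to show $X<2\mathbb{E}[X]=\theta n/b$ a.a.s., and concludes that there simply are not enough copies of $J$ to form the required tiling. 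You instead count $t$-element families of pairwise vertex-disjoint copies of $F^*$ and observe that, after the $1/t!$ compensation and a Stirling estimate, the expected number of such families decays geometrically once $c$ is small; Markov's inequality then finishes. Both routes are clean; yours is slightly more elementary in that it uses only a first-moment bound and avoids the second-moment machinery, while the paper's version reuses its already-established Proposition~\ref{prop:BHKM} and gives the slightly stronger conclusion that a.a.s.\ there are fewer than $\theta n/b$ copies of $J$ in total (disjoint or not). One minor point to tidy up: $t=\theta n/v_F$ should be rounded to an integer, but this affects nothing.
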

\begin{proof}
  Let $F$ be a $k$-graph with $b$ vertices and $f$ edges and let $\theta>0$ be given.
  Recall that $d^*(F):=\max\left\{e_{F'}/(v_{F'}-1):\,F'\subseteq F,\ v_{F'}\geq 2\right\}$.
  Suppose $J$ is a subgraph of $F$ that achieves the maximum in the definition of~$d^*(F)$.
  Suppose~$J$ has $s$ vertices and $j$ edges.
  Then $d^*(F)=d^*(J)=j/(s-1)$.
  Let~$c=(\theta/(2b))^{1/j}$ and $p= cn^{-1/d^*(F)}$.
  Note that it suffices to prove that
  \begin{equation}
    \label{eq:2}
    \lim_{n\to\infty}\mathbb{P}\left[H^{(k)}(n,p)
      \mbox{ contains a $J$-tiling with at least }\theta n/b
      \mbox{ members}\right] = 0.
  \end{equation}

  Let $H:=H^{(k)}(n,p)$.
  We claim that $\Phi_J\geq c^jn$.
  Indeed, let $J'$ be a subgraph of $J$ with $t$ vertices and $j'$ edges.
  By the choice of $J$, we have $(s-1)/j\leq (t-1)/j'$. Note also that $c\leq 1$. Thus,
  \[
    n^tp^{j'}=n^t\left(cn^{-1/d^*(F)}\right)^{j'} =
    n^{t}\left(cn^{-(s-1)/j}\right)^{j'} =
    \left(c^{j'}n\right)\left(n^{(t-1)/j'-(s-1)/j}\right)^{j'}\geq
    c^{j'}n\geq c^jn.
  \]
  Hence $\Phi_J\geq c^jn$.

  Let $\A$ be the family of all ordered $s$-sets of vertices from $V(H)$.
  For each $A\in\A$, let~$I_A$ be the indicator random variable of the event that $A$ spans a labelled copy of $J$ in~$H$.
  Let $X: =\sum_{A\in \A}I_A$.
  Then $\mu:=\mathbb{E}(X)=n(n-1)\cdots(n-s+1)p^j\geq n^sp^j/2$.
  From Proposition~\ref{prop:BHKM}, we have
  \[
    \mathbb{P}\left[X\geq 2\mu\right]\leq
    \frac{s!2^{2s}n^{2s}p^{2j}}{\mu^2\Phi_J}\leq
    \frac{s!2^{2s+2}n^{2s}p^{2j}}{n^{2s}p^{2j}
      \Phi_J}=O\left(\frac{1}{\Phi_J}\right)=o(1).
  \]
  Note that $n^sp^j=n^s(cn^{-1/d^*(F)})^j=c^jn=\theta n/(2b)$.
  Hence a.a.s. $X<2\mu\leq2n^sp^j=\theta n/b$, which clearly implies~\eqref{eq:2}, as required.
\end{proof}

Let us now define some $k$-graphs~$H_n$ on~$n$ vertices with~$\delta(H_n)=\Omega(n^{k-1})$.

\begin{definition}\label{def:Hn}
  \emph{Let $H_n:=H_n(A,B,\eta)$ be the $n$-vertex $k$-graph whose vertex set can be partitioned into two disjoint sets $V=A\cup B$ such that $|A|=\eta n$, and whose edges are all the $k$-sets in $V$ that intersect $A$.}
\end{definition}

Note that~$\delta(H_n)=\binom{n-1}{k-1}-\binom{(1-\eta)n-1}{k-1}=(1+o(1))\big(1-(1-\eta)^{k-1}\big){n-1\choose k-1}$ where~$o(1)\to0$ as~$n\to\infty$. Since $1-(1-\eta)^{k-1}\ge \eta/2$ by induction on $k$, we have
$\delta(H_n)\geq(\eta/(2(k-1)!))n^{k-1}$ if~$n$ is sufficiently large.

Consider any $k$-graph $F$ with $b$ vertices and at least one edge and let $0<\eta<1/b$ be arbitrary.
Fix $\theta:=1-b\eta>0$ and let $c = c(\theta,F)$ be given by Proposition~\ref{prop2} applied with~$k$, $F$ and~$\theta$.
Consider $n\in \bbN$ sufficiently large, and let $H_n:=H_n(A,B,\eta)$ be as in Definition~\ref{def:Hn}.
Let $H'_n:=H^{(k)}(n,p)$, where $p\leq cn^{-1/d^*(F)}$, and note that if $H_n\cup H'_n$ contains an $F$-factor, then $H'_n[B]$ must contain an
$F$-tiling covering at least $|B|-(b-1)|A|=(1-\eta)n-\eta n(b-1)=\theta n$ vertices.
However, Proposition~\ref{prop2} implies that a.a.s.\ there is no such $F$-tiling in $H'_n[B]$, and thus, a.a.s.\ $H_n\cup H'_n$ does not contain an $F$-factor.
This shows that we must require~$p=\Omega(n^{-1/d^*(F)})$ in Theorem~\ref{main} when $\alpha_F=0$.

\subsection{A question of Balogh, Treglown and Wagner}
A natural question asked by Balogh, Treglown and Wagner \cite{BTW} is whether Theorem~\ref{thm:BTW} holds if we replace $\a n$ with a \emph{sublinear} term.
We note here that the answer to this question is negative for graphs and also for $k$-graphs with minimum vertex degree $o(n^{k-1})$ in general, as we now demonstrate when $F$ is a single $k$-edge.
Indeed, let $H_n:=H_n(A,B,\eta)$ be as in Definition~\ref{def:Hn}, where $\eta=1/(3k\omega)$ with $\omega=\omega(n)\to\infty$ as $n\to\infty$, and say~$\omega=o(n)$.
Then we have
$\delta(H_n)\geq(\eta/(2(k-1)!))n^{k-1}=\Omega(n^{k-1}/\omega)$.
Let $p=(1/2)\binom{n-1}{k-1}^{-1}\ln\omega$.
Next we show that a.a.s.~$H_n\cup H^{(k)}(n,p)$ does not contain a perfect matching.
To see this, let~$X$ denote the number of isolated vertices in $H'_n=H^{(k)}(n,p)$.
Then
\[
  \mathbb{E}(X)=n(1-p)^{\binom{n-1}{k-1}}
  \geq ne^{-2p\binom{n-1}{k-1}}= n/\omega.
\]
Computing the second moment yields that a.a.s.~there are at least $n/(2\omega)$ isolated vertices in~$H'_n$.
On the other hand, a matching of~$H_n$ can cover at most~$k|A|=n/(3\omega)$ vertices.
It follows that~$H_n\cup H_n'$ does not have a perfect matching a.a.s.

The discussion above tells us that, in the randomly perturbed model, if the minimum degree condition on the host graph~$H$ on~$n$ vertices is only that~$\delta(H)\geq n^{k-1}/\omega$ and~$\omega\to\infty$ as~$n\to\infty$, then we need~$p\gg n^{-k+1}$ for~$H\cup H^{(k)}(n,p)$ to contain a perfect matching a.a.s.
Furthermore, if the condition is~$\delta(H)\geq n^{k-1-\epsilon}$ for some~$\epsilon>0$, then~$p=\Omega(n^{-k+1}\log n)$, which matches the purely random
threshold for a perfect matching~\eqref{eq:factor_threshold}, as $\ell(n;F)=\log n$ in this case.

The argument above applies to other ($k$-)graphs $F$, such as strictly balanced ones.
We omit the details.

\section{The Absorbing Lemma}
\label{s3}
In this section we prove our absorbing lemma, which is the key result in the proof of Theorem~\ref{main}.
Let us first give the following definition concerning the absorption method.

\begin{definition}\label{def:ab}
  \emph{Let $F$ be a $k$-graph with $b$ vertices.
  Suppose $H$ is a $k$-graph with vertex set~$V$ and let $S\subseteq
  V$ with $|S|= b$ be given.
  We call $\emptyset\neq A\subseteq V\setminus S$ with $|A|\in b \bbN$
  an \emph{$(S,F)$-absorber}, or
  \emph{$S$-absorber} for short, if both $H[A]$ and $H[A\cup S]$ contain
  $F$-factors.}
\end{definition}

In order to obtain an absorbing set, we need the following result
from~\cite{Nena}, which follows from~\cite[Lemma~10.7]{Mont2} (see
also~\cite[Lemma~2.8]{Mont}).

\begin{lemma}[\cite{Nena}, Lemma~2.3]
  \label{lem:NP}
  Let~$0<\b\leq 1$ be given. There exists $m_0$ such that the
  following holds for every $m\geq m_0$. There exists a bipartite
  graph $B$ with vertex classes $X_m\cup Y_m$ and~$Z_m$ and maximum
  degree\footnote{In~\cite{Nena}, this lemma is stated with $\Delta\leq40$ instead of
  $\Delta\leq100$.} $\Delta(B)\leq 100$, such that $|X_m|=m+\b m$, $|Y_m| =2m$
  and $|Z_m| = 3m$, and for every subset $X_m'\subseteq X_m$ with
  $|X_m'|=m$, the induced graph $B[X_m'\cup Y_m, Z_m]$ contains a
  perfect matching.
\end{lemma}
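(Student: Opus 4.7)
The plan is to construct $B$ by the probabilistic method and verify the robust matching property via Hall's theorem. The heuristic is that a constant-degree random bipartite graph between $Z_m$ and $X_m \cup Y_m$ should have enough expansion that, even after the adversary deletes any $\beta m$ vertices from $X_m$, Hall's condition still holds in $B[X_m' \cup Y_m, Z_m]$.

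Concretely, I would fix constants $d_Y$ and $d_X$ depending only on $\beta$ (with $d_Y + d_X \leq 50$, say) and, for each $z \in Z_m$, independently choose a uniformly random $d_Y$-subset of $Y_m$ and a uniformly random $d_X$-subset of $X_m$ as its neighborhoods in $B$. A Chernoff plus union bound argument over the vertices of $X_m \cup Y_m$ then shows that $\Delta(B) \leq 100$ with high probability, handling the degree condition. For the matching property, fix any $X_m' \subseteq X_m$ of size $m$; since $|X_m' \cup Y_m| = 3m = |Z_m|$, Hall's theorem reduces the existence of a perfect matching in $B[X_m' \cup Y_m, Z_m]$ to showing
\[
  |N_B(T) \cap (X_m' \cup Y_m)| \geq |T| \quad \text{for every } T \subseteq Z_m.
\]
The adversary can exclude at most $\beta m$ of the $X_m$-neighbors of $T$, so I would split into two regimes: for small $|T|$, show $|N_B(T) \cap Y_m| \geq |T|$ (which alone suffices, since then no $X_m$-contribution is needed); for large $|T|$, show $|N_B(T) \cap (X_m \cup Y_m)| \geq |T| + \beta m$ (which suffices because the adversary can subtract at most $\beta m$). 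Either regime yields the Hall bound uniformly over the choice of $X_m'$.

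The main obstacle will be running the expansion argument uniformly across all sizes of $T$. For small $|T|$, the expected size of $N_B(T) \cap Y_m$ grows roughly linearly in $|T|$, and Chernoff concentration beats the combinatorial count $\binom{3m}{|T|}$ provided $d_Y$ is a sufficiently large constant. For large $|T|$, I would turn the argument around and bound the \emph{non-neighborhood} $(X_m \cup Y_m)\setminus N_B(T)$: for each $v \in X_m \cup Y_m$, the probability that no $z \in T$ picks $v$ decays exponentially in $|T|$, so a union bound controls the deviation of $|N_B(T) \cap (X_m \cup Y_m)|$ from its mean. Calibrating $d_X, d_Y$ and the threshold separating the two regimes in terms of $\beta$ (but independently of $m$) is the delicate step, and once that is done the probabilistic existence of a valid $B$ follows.
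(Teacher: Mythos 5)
The paper does not prove Lemma~\ref{lem:NP}; it cites it from Nenadov--Pehova, who in turn derive it from Montgomery's work. So your attempt must be judged on its own merits, and there is a genuine gap in the construction you propose.

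Your random model has each $z\in Z_m$ choose $d_X+d_Y\leq 50$ neighbours, so degrees in $Z_m$ are deterministically bounded, but degrees of vertices in $X_m\cup Y_m$ are only controlled in expectation: a fixed $y\in Y_m$ has degree distributed roughly as $\mathrm{Poisson}(3d_Y/2)$. Two things go wrong. First, the maximum of $\Theta(m)$ such Poisson variables grows like $\log m/\log\log m$, so $\Delta(B)\leq 100$ fails asymptotically almost surely; a Chernoff bound cannot rescue a constant-mean variable down to a constant cap when you take a union bound over linearly many vertices. Second, and more fatally for the Hall argument, each fixed $y\in Y_m$ is isolated with probability $(1-d_Y/2m)^{3m}\to e^{-3d_Y/2}$, a positive constant, so a.a.s.\ $\Theta(m)$ vertices of $Y_m$ have no neighbours at all. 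Any isolated $y\in Y_m$ lies in $X_m'\cup Y_m$ for \emph{every} choice of $X_m'$, and taking $S=\{y\}$ already violates Hall's condition in $B[X_m'\cup Y_m,Z_m]$. This is the concrete failure of your ``large $T$'' regime: for $T=Z_m$ the non-neighbourhood estimate gives $|(X_m\cup Y_m)\setminus N_B(T)|\approx e^{-\Theta(1)}\cdot(3+\beta)m=\Theta(m)$, which is far larger than the slack $\beta m$ you are allowed, no matter how you calibrate the constants $d_X,d_Y$.

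The problem is structural, not a matter of tuning: any construction in which only the $Z_m$-side degrees are fixed leaves the $X_m\cup Y_m$-side degrees Poissonian and hence both unbounded above and, for a linear number of vertices, equal to zero. To make a probabilistic proof work one must control degrees on \emph{both} sides, for instance by taking a random biregular (or near-regular) bipartite graph via the configuration/permutation model, or by additionally letting each $v\in X_m\cup Y_m$ choose a constant number of neighbours in $Z_m$ and then arguing about the union. With degrees bounded below on the $X_m\cup Y_m$ side, the two-regime Hall analysis you sketch (small sets expand; large sets have small non-neighbourhood) is the right shape and can be pushed through, and bounded degree on both sides is then immediate by construction rather than by concentration. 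This is essentially what Montgomery's explicit ``robustly matchable'' bipartite graphs achieve; the citation chain in the paper reflects that a careful such construction is what is actually needed here.
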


We use Lemma~\ref{lem:NP} to prove the following absorbing lemma for
hypergraphs, obtaining a sufficient condition for the existence of an
absorbing set.  This lemma extends to $k$-graphs a result for graphs
($k=2$) obtained in~\cite[Lemma 2.2]{Nena}.  For later convenience, we
state this lemma in a form slightly more general than we need here
(in this paper, we apply this lemma with~$V_0=\emptyset$).

\begin{lemma}[Absorbing Lemma]
  \label{lem:abs}
  For every positive integer $b$ and every $\r>0$ there exists $\xi$
  such that the following holds for every
  $0<\varepsilon<\min\{1/3, \r/2\}$ and every sufficiently
  large~$n$.  Let~$F$ be a $k$-graph with~$b$ vertices.  Suppose~$H$
  is an $n$-vertex $k$-graph and there exists $V_0\subset V(H)$
  of size at most~$\varepsilon n$ such that {\rm(\textit{i})}~for
  every $b$-subset~$S$
  of $V(H)\setminus V_0$, there are at least~$\r n$ vertex-disjoint
  $S$-absorbers and {\rm(\textit{ii})}~for every~$v\in V(H)$, there
  are at least~$\r n$ copies of~$F$ containing~$v$ that are pairwise
  vertex-disjoint, except for the vertex~$v$.  Then~$H$ contains a subset
  $A\subseteq V(H)$ of size at most~$\r n$ such that, for every
  $R\subseteq V(H)\setminus A$ with $|R|\leq \xi n$ such that~$b$
  divides $|A|+|R|$, the $k$-graph $H[A\cup R]$ contains an
  $F$-factor.
\end{lemma}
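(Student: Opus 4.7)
The plan is to adapt the graph absorbing lemma of~\cite[Lemma~2.2]{Nena} to $k$-graphs, using the bipartite template from Lemma~\ref{lem:NP} as the combinatorial backbone. I will choose a hierarchy $\xi \ll \beta \ll \rho$ of constants and set $m := \lceil \xi n/(b\beta)\rceil$, so that any remainder $R$ of size at most $\xi n$ contributes $s := |R|/b \le \beta m$ blocks of size $b$. Applying Lemma~\ref{lem:NP} with parameter $\beta$ and this $m$, I obtain the template $B$ on vertex classes $X_m \cup Y_m$ (sizes $(1+\beta)m$ and $2m$) and $Z_m$ (size $3m$), with $\Delta(B) \le 100$ and the resilient-matching property.

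Next I embed the template into $H[V(H)\setminus V_0]$ by a sequence of greedy steps. For each $v \in X_m\cup Y_m$ I select pairwise vertex-disjoint copies $T_v$ of $F$ in $H[V(H)\setminus V_0]$; this is feasible because condition~(ii), combined with $|V_0|\le\varepsilon n$, ensures every candidate vertex lies in at least $\rho n/2$ vertex-disjoint copies of $F$ entirely contained in $V(H)\setminus V_0$, while only $(3+\beta)m=O(\xi n)$ such copies are required. For each edge $(x,z)\in E(B)$ I then pick a $(T_x,F)$-absorber $A_{xz}$ of size at most $2/\rho$, vertex-disjoint from everything chosen earlier; condition~(i) provides $\rho n$ pairwise vertex-disjoint $T_x$-absorbers, and a simple averaging argument shows that at least $\rho n/2$ of them have size at most $2/\rho$ (since they are pairwise disjoint in an $n$-vertex universe), so the greedy step succeeds as long as $\xi$ is small enough that only $O(m/\rho)\ll \rho n$ vertices have been used at any point. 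For each $v_0\in V_0$, I also greedily attach a copy $C_{v_0}$ of $F$ through $v_0$ (possible by condition~(ii)). Define $A$ as the union of all $T_v$'s, $A_{xz}$'s and $C_{v_0}$'s; a direct calculation gives $|A|\le\rho n$ for $\xi$ sufficiently small, and $H[A]$ admits an $F$-factor obtained by stitching together the internal $F$-factor of each gadget.

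To verify the absorbing property, let $R\subseteq V(H)\setminus A$ with $|R|\le\xi n$ and $b\mid|A|+|R|$. Since $b\mid|A|$ by construction, we have $b\mid|R|$, so I write $|R|=bs$ with $s\le\beta m$. The idea is to partition $R$ into blocks $R_1,\dots,R_s$ of size $b$, reserve distinct slots $x_1,\dots,x_s\in X_m$ for them, and set $X'\subseteq X_m\setminus\{x_1,\dots,x_s\}$ to be any subset of size $m$. Lemma~\ref{lem:NP} then yields a perfect matching $\phi\colon Z_m\to X'\cup Y_m$. The $F$-factor of $H[A\cup R]$ is assembled from: the joint $F$-factor of $A_{\phi(z),z}\cup T_{\phi(z)}$ for each $z\in Z_m$; the internal $F$-factor of $A_{vz'}$ for each edge $(v,z')\in E(B)$ outside the matching; $T_v$ covered by itself (as a copy of $F$) for each padding vertex $v\in X_m\setminus(X'\cup\{x_1,\dots,x_s\})$; $C_{v_0}$ covered by itself for each $v_0\in V_0$; and an $F$-factor of $H[T_{x_i}\cup R_i]$ for each reserved slot $x_i$.

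The hard part will be the last ingredient: arbitrary blocks $R_i$ need not be $F$-compatible with the chosen $T_{x_i}$'s, since the absorbers $A_{xz}$ were built with $T_x$ in mind, not with $R_i$. I expect to bridge this gap by strengthening the second stage above: the $T_v$'s should be chosen with an extra ``universality'' property, namely that for every $b$-subset $S\subseteq V(H)\setminus V_0$ sufficiently many $v\in X_m$ satisfy that $H[T_v\cup S]$ admits an $F$-factor, so that the slots $x_i$ can be selected compatibly with the blocks $R_i$ via a Hall-type argument. The abundance of $S$-absorbers supplied by condition~(i)—in particular, taking each $T_v$ to be a size-$b$ absorber for a carefully chosen ``query'' $b$-set $S$, which automatically gives $F$-factors in $H[T_v\cup S]$—is what makes this feasible. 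Coordinating this compatibility across all potential $R$'s simultaneously with the template's reservation of slots is the technical heart of the argument.
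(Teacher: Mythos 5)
Your high-level plan (Nenadov--Pehova template, greedy embedding, reserve flexible slots) is on the right track, but the argument as written has a genuine gap at exactly the point you flag as ``the technical heart.'' The step ``an $F$-factor of $H[T_{x_i}\cup R_i]$ for each reserved slot $x_i$'' has no support: $T_{x_i}$ is just an arbitrary copy of $F$ and $R_i$ is an arbitrary $b$-set outside $A$, so there is no reason for $H[T_{x_i}\cup R_i]$ to have an $F$-factor. Your proposed fix (choose the $T_v$'s so that for every $b$-set $S$ many $T_v$ satisfy ``$H[T_v\cup S]$ has an $F$-factor'') is essentially asking for a reusable absorber gadget that works against every possible $S$ simultaneously, which is not given by hypothesis~(\textit{i}) and is not something a single greedy pass can deliver. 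You also have to say what the $Z_m$-vertices of the template map to in $H$; in your scheme they do not correspond to anything, so the matching condition of Lemma~\ref{lem:NP} is not actually being exploited.

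The paper resolves this differently, and the difference is not cosmetic. It maps vertices of $X_m\cup Y_m$ to \emph{single} vertices of $H$ (forming sets $X,Y$) and vertices of $Z_m$ to disjoint $(b-1)$-sets (forming $Z$), so that each template edge $e=\{w_1,w_2\}$ corresponds to a genuine $b$-set $\{\phi_1(w_1)\}\cup\phi_2(w_2)$ for which hypothesis~(\textit{i}) directly supplies an absorber $A_e$. Crucially, $X$ is chosen as a \emph{random} subset of $V(H)\setminus V_0$ of density $q$, and hypothesis~(\textit{ii}) plus Chernoff are used to guarantee that for \emph{every} vertex $v\in V(H)$ there are $\Omega(n)$ disjoint $(b-1)$-sets inside $X$ that complete $v$ to a copy of~$F$. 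Then, given a remainder set $R$, one does not match blocks of $R$ to reserved template slots at all; instead one greedily extracts, for each $v\in R$ (and for a few extra padding vertices $X'\subseteq X$), a disjoint $(b-1)$-set $A_v\subseteq X$ with $H[\{v\}\cup A_v]\cong F$, and lets $Q=X'\cup\bigcup_v A_v$. Now $H[Q\cup R]$ trivially has an $F$-factor, and the resilient-matching property of Lemma~\ref{lem:NP} (applied to $X\setminus Q$, which has exactly size $m$ by the choice of constants) covers $H[A\setminus Q]$. This sidesteps the compatibility problem entirely: compatibility with $R$ is provided vertex-by-vertex via condition~(\textit{ii}) and the randomization of $X$, not via the absorbers. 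If you want to salvage your write-up, replace the slot-reservation step with this mechanism: make $X_m\cup Y_m$ correspond to single vertices, make $Z_m$ correspond to $(b-1)$-sets, take $X$ random, and use condition~(\textit{ii}) inside $X$ to handle~$R$.
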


We refer to the sets~$A$ as in the lemma above as \emph{absorbing sets}.

\begin{proof}[Proof of Lemma~\ref{lem:abs}]
  For every~$v\in V(H)$, let~$\F_v$ be a family of~$\r n$ pairwise
  disjoint $(b-1)$-sets of vertices given by the copies of~$F-v$
  in~$H-v$ that hypothesis~(\textit{ii}) gives.

  Define the constants $q=\r/(1300b^2)$ and $\b=q^{b-1}\r/8$, and put~$\xi=\b q/(2(1+\b)(b-1))$.
  We will obtain a subset $X\subseteq V(H)\setminus V_0$ with $qn/2\leq|X|\leq 2qn$ such that for each $v\in V (H)$ there are at least $q^{b-1}|\F_v|/4$ sets from $\F_v$ contained in $X$.
  Consider a subset $X$ of~$V(H)\setminus V_0$ obtained by picking each vertex of $V(H)\setminus V_0$ with probability~$q$, independently of all the other vertices.
  Since $2qn/3\leq \mathbb{E}[|X|]=q(n-|V_0|)\leq qn$ tends to infinity as $n$ increases, by Chernoff's inequality, a.a.s.\ we have $qn/2\leq|X|\leq 2qn$.

  For every $v\in V(H)$, let $X_v$ denote the number of the sets from $\F_v$ contained in $X$.
  Clearly, $\mu:=\mathbb{E}[X_v]\geq q^{b-1}(|\F_v|-\varepsilon n)\geq q^{b-1}\r n/2$.
  Since $H$ has $n$ vertices, by using the union bound and Chernoff's inequality (see e.g. \cite[Theorem 2.1]{Janson}), we have
  \[
    \mathbb{P}\left[\,\text{there is}\, v\in V(H)\, \text{with}\, X_v<
      \frac{\mu}{2}\,\right] \leq
    n\exp\left(-\frac{(\mu/2)^2}{2\mu}\right)\leq
    n\exp\left(-\frac{q^{b-1}\r}{16} n\right)=o(1).
  \]

  Therefore, there is $X\subset V(H)\setminus V_0$ such that $qn/2\leq|X|\leq 2qn$ and such that, for each $v\in V (H)$, there are at least $\mu/2\geq q^{b-1}\r n/4$ sets from $\F_v$ contained in $X$.
  For each $v\in V(H)$, denote by $\F_v'$ the collection of members of~$\F_v$ that are completely contained in~$X$.
  We have $|\F_v'|\geq q^{b-1}\r n/4$.

  Let $m = |X|/(1+\b)$ and let $B$ be the bipartite graph given by Lemma \ref{lem:NP} with vertex classes $X_m\cup Y_m$ and $Z_m$.
  Choose arbitrarily vertex-disjoint subsets $Y,\,Z\subseteq V (H)\setminus (X\cup V_0)$ with $|Y | = 2m$ and $|Z| = 3m(b-1)$.
  Now partition~$Z$ arbitrarily into $(b-1)$-subsets $\Z = \{Z_i\}_{i\in[3m]}$ and fix bijections $\phi_1: X_m\cup Y_m \rightarrow X\cup Y$ and
  $\phi_2: Z_m \rightarrow \Z$ such that $\phi_1(X_m) = X$ and $\phi_1(Y_m) = Y$.

  We claim that there exists a family $\{A_e\}_{e\in E(B)}$ of pairwise vertex-disjoint $b^2$-subsets of $V (H)\setminus(X\cup Y\cup Z)$ such that for every $e = \{w_1,w_2\} \in E(B)$ with $w_1\in X_m\cup Y_m$ and $w_2\in Z_m$, the set $A_e$ is a $(\{\phi_1(w_1)\}\cup \phi_2(w_2))$-absorber.
  The idea here is to greedily choose such absorbers one by one for each $e\in E(B)$.
  Indeed, suppose we have already found appropriate subsets for all the edges from $E'\subset E(B)$ with~$E'\neq E(B)$.
  Note that $qn/(2+2\b)\leq m\leq 2qn/(1+\b)$ and $\Delta(B)\leq 100$.
  Therefore, since $|Z_m|\leq 3m$ we have
  \begin{equation}
    \label{eq:1}
    |X|+|Y|+|Z|+\bigg|\bigcup_{e\in E'}A_e\bigg| \leq
    4m+3m(b-1)+b^2|E'|<4bm+100b^2|Z_m| \leq \r n/2.
  \end{equation}
  Choose arbitrarily $e = \{w_1,w_2\} \in E(B)\setminus E'$ and let $S_e=\{\phi_1(w_1)\}\cup \phi_2(w_2)$.
  Note that $|S_e|=b$ and recall that there are at least $\r n$ vertex-disjoint $S_e$-absorbers in~$H$.
  Denote by~$\F_{e}$ the set of these vertex-disjoint $S_e$-absorbers.
  Since each vertex in $X\cup Y\cup Z\cup\bigcup_{e\in E'}A_e$ is in at most one $S_e$-absorber in $\F_{e}$, the number of $S_e$-absorbers disjoint from $X\cup Y\cup Z\cup\bigcup_{e\in E'}A_e$ is at least $\r n-\r n/2=\r n/2>0$ (recall~\eqref{eq:1}).
  Therefore, we can apply the described procedure repeatedly until we obtain $\{A_e\}_{e\in E(B)}$.

  Let $A=X\cup Y\cup Z\cup\bigcup_{e\in E(B)}A_e$.
  Then $|A|\leq \gamma n$.
  We claim that $A$ is an absorbing set as desired.
  Consider any subset $R\subseteq V(H)\setminus A$ such that $|R|+|A|\in b\bbN$ and $0\leq|R|\leq \xi n$.
  Suppose for a moment that $Q\subset X$ is such that $|Q|=\b m$.
  Then, setting $X''=X\setminus Q$ and noting that $|X''|= m$, from Lemma~\ref{lem:NP}, considering $X_m'=\phi_1^{-1}(X'')$, we see that there is a perfect matching $M$ in $B$ between $X_m'\cup Y_m$ and $Z_m$.
  For each edge $e = \{w_1,w_2\}\in M$ take an $F$-factor in $H[\{\phi_1(w_1)\}\cup\phi_2(w_2)\cup A_e]$ and for each $e \in E(B)\setminus M$ take an $F$-factor in $H[A_e]$.
  All together, this gives an $F$-factor of $H[A\setminus Q]$.
  Thus, if we can find a $Q\subset X$ with $|Q|=\b m$ and such that there is an $F$-factor in $H[Q\cup R]$, then together with the above argument we
  get an $F$-factor in $H[A \cup R]$, as required.

  To find a set~$Q$ as above, note first that $(b-1)|R|\leq (b-1)\xi n=\b qn/(2+2\b)\leq \b m$.
  We claim that $\b m-(b-1)|R|\in b\bbN$.
  Indeed, taking disjoint subsets $Q_1,\,Q_2\subset X $ such that $|Q_1|=\b m-(b-1)|R|$ and $|Q_2|=(b-1)|R|$, we certainly have that $|A|+|R|,\,|Q_2|+|R|\in b \bbN$ and, by virtue of the existence of an $F$-factor in $H[A\setminus (Q_1 \cup Q_2)]$, we have that $|A|-|Q_1|-|Q_2|\in b \bbN$.
  Thus $|Q_1|=\b m-(b-1)|R|\in b\bbN$ also.
  Now we take an arbitrary subset $X'\subseteq X$ with $|X'|=\left(\b m-(b-1)|R|\right)/b$.
  Next we find a subset $A_v\in \F_v'$ for each $v\in R\cup X'$ such that all these subsets are pairwise vertex-disjoint and do not contain any vertex of~$X'$.
  In fact, we can choose such subsets greedily since $|X'|+(b-1)|X'|+(b-1)|R|=\b m$ and
  \[
    |\F_v'|\geq q^{b-1}|\F_v|/4\geq q^{b-1}\r n/4\geq 2\b n\geq 2\b m.
  \]
  Since~$A_v\in\F_v'$ for every~$v\in R\cup X'$, there is an $F$-factor in~$H[Q\cup R]$ if we set $Q:= X'\cup\bigcup_{v\in R\cup X'}A_v\subset X$.
  Also, note that $|Q| =\b m$ as required.
  This proves that $A$ is indeed the desired absorbing set.
\end{proof}

Now we introduce the absorbers that we use in this paper.

\begin{definition}
  \label{def:abs}
  \emph{Let $F$ be a $k$-graph with $b$ vertices and let $H$ be a $k$-graph with $n$ vertices.
  Let $S\subset V(H)$ with~$|S|=b$ be given.
  Suppose $A\subseteq V(H)\setminus S$ is the union of some pairwise disjoint $b$-sets $A_1,\dots,A_b$, say $A_i=\{v_i^1,\dots,v_i^b\}$ for each $i\in[b]$.
  Let $A_{b+1}=\{v_1^b,\dots,v_b^b\}$.
  We call $A$ a \emph{simple $S$-absorber} if for some labelling of the elements of~$S$, say $S=\{s_1,\dots,s_b\}$, both $H[A_i]$ and $H[\{s_i\}\cup(A_i\setminus\{v_i^b\})]$ contain copies of $F$ for all $i\in[b]$ and, furthermore, $H[A_{b+1}]$ contains a copy of $F$.}
\end{definition}

Clearly, simple $S$-absorbers are $S$-absorbers.
In the next lemma we prove that with appropriate conditions on~$p$ and on the minimum vertex degree of $H$, a.a.s.\ $H\cup H^{(k)}(n,p)$ contains many
simple $S$-absorbers for any $b$-subset~$S$ of~$V(H)$.
This result allows us to apply Lemma~\ref{lem:abs}.
Recall the definition of $\alpha_F$ from Theorem~\ref{main}.

\begin{lemma}
  \label{lem:absa}
  Let $F$ be a $k$-graph with $b$ vertices and let $0<\r\ll\eps,\,1/b$.
  Then there is a constant $c = c(\a_F,\eps) > 0$ such that the following holds for any $k$-graph~$H$ with~$n$ vertices and minimum degree $\delta(H)\geq (\a_F+\eps)\binom{n-1}{k-1}$.
  If $p\geq cn^{-1/d^*(F)}$, then a.a.s., for every $b$-subset~$S$
  of~$V(H)$, there are at least $\r n$ vertex-disjoint simple
  $S$-absorbers in $H\cup H^{(k)}(n,p)$.
\end{lemma}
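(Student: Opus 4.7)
The plan is, for each $b$-subset $S=\{s_1,\dots,s_b\}\subset V(H)$, to build many vertex-disjoint simple $S$-absorbers via the two-step strategy from the introduction: the $F_v$-edges of every required copy of $F$ come from~$H$, while the remaining $J$-edges (with $J := F - v$) come from $H^{(k)}(n,p)$. Pick $v\in V(F)$ realising the infimum in~\eqref{eq:3}; applying the definition of $\alpha_F$ with slack $\varepsilon/2$ yields $\varepsilon'>0$ such that every $w\in V(H)$ admits at least $\varepsilon' n^{b-1}$ labelled $F_v$-arms in~$H$. Since $d^*(J)\le d^*(F)$, Lemma~\ref{lem:PhiF} gives $\Phi_J\ge cn$ provided $c$ is large enough.

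I would encode simple $S$-absorbers via a fixed template $T$ on $b^2$ variable vertices $\{y_i^j\}_{i,j\in[b]}$ recording the $2b+1$ required copies of~$F$ from Definition~\ref{def:abs}. Each copy is split into its $F_v$-part (to be provided by~$H$) and its $J$-part (to be provided by $H^{(k)}(n,p)$), with labellings chosen so that, within each column~$i$, the two $F$-copies share a single $J$-part on $\{y_i^1,\dots,y_i^{b-1}\}$, and a further $J$-part arises from the $A_{b+1}$-copy on $\{y_2^b,\dots,y_b^b\}$. The $J$-edges of~$T$ therefore form a vertex-disjoint union $J^*$ of $b+1$ copies of~$J$, for which $\Phi_{J^*}=\Phi_J\ge cn$. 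Letting $\mathcal{F}_S$ be the family of ordered $b^2$-tuples of distinct vertices in $V(H)\setminus S$ whose $F_v$-edges match the template in~$H$, Lemma~\ref{lem:BHM}\ref{BHMii} applied with the lemma's~$F$ taken to be~$J^*$ gives, with probability $1-\exp(-n)$, at least $(\lambda/2)n^{b^2}p^{(b+1)e_J}$ tuples of $\mathcal{F}_S$ spanning a labelled copy of~$J^*$ in~$H^{(k)}(n,p)$; each such tuple yields a simple $S$-absorber in $H\cup H^{(k)}(n,p)$. Since $n^{b^2}p^{(b+1)e_J}\ge n\cdot(cn)^{b+1}=\Omega(n^{b+2})$ while the number of absorbers through any given vertex is $O(n^{b+1})$ in expectation (concentrated by Chernoff), a standard greedy extraction yields $\Omega(n)$ vertex-disjoint simple $S$-absorbers, and a union bound over the $O(n^b)$ choices of~$S$ concludes.

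The main technical obstacle will be the lower bound $|\mathcal{F}_S|\ge \lambda n^{b^2}$ for some $\lambda=\lambda(\varepsilon,b)>0$: the choice of~$v_1^b$ and of an $F_v$-arm $(v_2^b,\dots,v_b^b)$ at~$v_1^b$ contributes only the factor $\varepsilon' n^b$, while for each column~$i$ one then needs the ordered tuple $(v_i^1,\dots,v_i^{b-1})$ to be a labelled $F_v$-arm at \emph{both} $s_i$ and $v_i^b$, i.e.\ to lie in $A(s_i)\cap A(v_i^b)$, and the raw $\varepsilon' n^{b-1}$ bound on labelled $F_v$-arms at a single vertex does not directly control these pairwise intersections. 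I expect the required bound to follow from a Cauchy--Schwarz / double-counting argument on the identity $\sum_{s,t}|A(s)\cap A(t)|=\sum_u D(u)^2\ge(\varepsilon' n^b)^2/n^{b-1}$, where $D(u)$ counts the number of~$w$ with $u\in A(w)$, exploiting the remaining freedom in choosing the top vertices~$v_i^b$ (constrained only by the $F_v$-arm condition on $(v_2^b,\dots,v_b^b)$ at $v_1^b$) to show that for every~$s_i$ enough choices of $v_i^b$ satisfy $|A(s_i)\cap A(v_i^b)|=\Omega(n^{b-1})$.
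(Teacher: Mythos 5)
Your two-step strategy (sprinkle only the edges of $J:=F-v$, and take all the $F_v$-edges from~$H$) is more parsimonious than what the paper does, and you have correctly located where it becomes delicate: in column~$i$ both the inner copy of~$F$ on~$B_i$ (apex~$v_i^b$) and the outer copy on $\{s_i\}\cup(B_i\setminus\{v_i^b\})$ (apex~$s_i$) would need the \emph{same} $(b-1)$-tuple $(v_i^1,\dots,v_i^{b-1})$ to be an $F_v$-arm in~$H$ at both $s_i$ and~$v_i^b$. Unfortunately, the Cauchy--Schwarz repair you sketch cannot close this gap. The identity $\sum_{s,t}|A(s)\cap A(t)|=\sum_u D(u)^2$ only controls the double sum; the minimum-degree hypothesis does not force the row sum $\sum_t|A(s_i)\cap A(t)|$ to be large \emph{over the admissible~$t$} for a given~$s_i$, and it can in fact vanish. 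Concretely, take $k=2$, $F=K_3$ (so $\alpha_F=0$ and $F_v$ is a path of length two centred at~$v$) and let $H$ be the complete bipartite graph with parts of size~$n/2$; this has $\delta(H)=n/2\ge(\alpha_F+\eps)(n-1)$. If all of $s_1,\dots,s_b$ lie in the same part, then any admissible tuple $(v_1^b,\dots,v_b^b)$ — with $(v_2^b,\dots,v_b^b)$ an $F_v$-arm at $v_1^b$ in~$H$ — puts $v_1^b$ and $\{v_2^b,\dots,v_b^b\}$ on opposite sides, so some column has $s_i$ and $v_i^b$ on opposite sides of the bipartition. For that column $N_H(s_i)\cap N_H(v_i^b)=\emptyset$, hence $A(s_i)\cap A(v_i^b)=\emptyset$, and your family $\mathcal{F}_S$ is empty: Lemma~\ref{lem:BHM}\ref{BHMii} has nothing to work with.

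The paper sidesteps exactly this obstacle by being deliberately wasteful: for each~$B\in\F_S$ it asks $H^{(k)}(n,p)$ to supply an entire copy of~$F$ on each~$B_i$ (with $v_i^b$ in the role of~$v^*$), together with a copy of~$F$ on $\{v_1^b,\dots,v_b^b\}$ — this is the $k$-graph $A_B\in\A(F)$. Thus the $F_{v^*}$-edges emanating from $v_i^b$ come from the random graph, not from~$H$, and the only demand placed on~$H$ is that $(v_i^1,\dots,v_i^{b-1})$ be an $F_{v^*}$-arm at $s_i$ \emph{alone}; the apex $v_i^b$ is then a free vertex, contributing a clean factor $n$ per column and giving $|\F_S|\ge(\eps')^bn^{b^2}/2$ without any intersection analysis. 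Sprinkling $(b+1)f$ edges instead of $(b+1)e_J$ costs nothing in this regime, since $\Phi_{A_B}\ge cn$ still holds by Lemmas~\ref{lem:PhiA} and~\ref{lem:PhiF}, and Lemma~\ref{lem:BHM}\ref{BHMii} already delivers failure probability $\exp(-n)$, enough for the union bound. A further structural difference: the paper does not greedily extract disjoint absorbers via a second-moment/concentration argument as you propose, but instead fixes a deletion set $W$ of size $b^2(\r n-1)$ and takes a union bound over the $2^n$ choices of~$W$ (together with the $\binom{n}{b}$ choices of~$S$), which is cleaner and avoids the vertex-degree concentration step entirely.
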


\begin{proof}
  Let $F$ be a $k$-graph on~$b$ vertices and $f$ edges.
  Fix constants $0<\r\ll\eps,\,1/b$.
  Let $\eps_0=\eps_0(\eps/2)$ be the constant given in the definition of $\a_F$ (applied with $\eps/2$) and write $\eps'=\eps_0/2$.
  Furthermore, let $1/n\ll1/c \ll\eps',\,1/b,\,1/f$ and let $H$ be as in the statement of the lemma.
  Let~$V:=V(H)$.

  Suppose $p\geq cn^{-1/d^*(F)}$.
  Fix a $b$-subset
  $S=\{s_1,\dots,s_b\}$ of $V$.
  To find $\r n$ vertex-disjoint simple $S$-absorbers in $V\setminus S$ in $H\cup H^{(k)}(n,p)$, it suffices to show that for every subset $W\subseteq V\setminus S$ with $|W|=b^2(\r n-1)$, there is a simple $S$-absorber in $V\setminus (S\cup W)$.
  Fix such a $W$ and let $H'=H[V\setminus W]$ and $n'=n-|W| \ge (1-b^2\gamma)n$.
  Since $\delta(H)\geq (\a_F+\eps)\binom{n-1}{k-1}$ and $\r\ll \eps,\,1/b$, we have that
 \[
    \delta(H')\geq\delta(H)-|W|\binom{n}{k-2}
    \geq (\a_F+\eps)\binom{n-1}{k-1}-b^2\r n\binom{n}{k-2}
    \geq (\a_F+\eps/2)\binom{n'-1}{k-1}.
  \]
  Since $n'$ is large enough, by the definition of $\a_F$ and the choice of $\eps_0$, there is some $v^*\in V(F)$ such that for every $w\in V(H')$, there are at least $\eps_0 (n')^{b-1}$ embeddings of $F_{v^*}$ in $H'$ that map $v^*$ to~$w$.
  Among such embeddings, at least $\eps_0 (n')^{b-1} - b (n')^{b-2} \ge \eps' n^{b-1}$ of them do not intersect~$S$.

  Let $\F_S$ be a family of $b^2$-subsets $B\subseteq V'$ with $B = \bigcup_{i\in[b]}B_i$, where $B_i=\{v_i^1,\dots,v_i^b\}$ and $B_i\cap B_j=\emptyset$ for $i\neq j$, such that $H[(B_i\setminus\{v_i^b\})\cup\{s_i\}]$ contains a copy of $F_{v^*}$ for each $i\in[b]$.
  We can suppose that $|\F_S|\geq(\eps' n^{b-1})^bn^b -
  \binom{b^2}{2}n^{b^2-1} \geq (\eps')^bn^{b^2}/2$: greedily
  choose~$B_i$ for each~$i$ separately and then ignore sets of choices
  such that the~$B_i$ are not pairwise disjoint.

  Now we find a simple $S$-absorber in $V'$ by adding $H^{(k)}(n,p)$
  to fill the missing edges in some~$B\in\F_S$.  We shall make use of
  Lemma~\ref{lem:BHM}~\ref{BHMii} and Lemma \ref{lem:PhiA} to prove
  that this ``filling'' of some~$B$ happens with very high
  probability.  Fix~$B\in\F_S$ and let the notation be as above: in
  particular, let $B=\bigcup_{i\in[b]}B_i$ with
  $B_i=\{v_i^1,\dots,v_i^b\}$ for all~$i\in[b]$.  We now
  define~$A_B\in\A(F)$ with~$V(A_B)=B$ such that,
  if~$A_B\subset H^{(k)}(n,p)$, then~$B$ becomes a simple $S$-absorber
  in~$H\cup A_B\subset H\cup H^{(k)}(n,p)$.  To define~$A_B$, we first
  require that~$A_B[\{v_1^b,\dots,v_b^b\}]$ should be a copy of~$F$,
  and we further impose that~$A_B[B_i]$ should be a copy of~$F$ for
  every~$i\in[b]$ with the additional requirement that~$v_i^b$ should
  play the r\^ole of~$v^*$ in~$A_B[B_i]=F$.  (The conditions we have
  just described do not necessarily define~$A_B$ uniquely; we
  let~$A_B$ denote one such $k$-graph, selected arbitrarily.)  A
  moment's thought now shows that, because~$B\in\F_S$, we have that
  $(H\cup A_B)[\{s_i\}\cup(B_i\setminus\{v_i^b\})]$ contains a copy
  of~$F$ for every~$i\in[b]$.  Since~$(H\cup A_B)[B_i]$ ($i\in[b]$)
  and~$(H\cup A_B)[\{v_1^b,\dots,v_b^b\}]$ contain a copy of~$F$, we
  conclude that~$B$ is a simple $S$-absorber in~$H\cup A_B$, as
  required.

  Now, owing to Lemmas~\ref{lem:PhiA} and~\ref{lem:PhiF}, we have
  $\Phi_{A_B}\geq cn$.
  Let~$X_S$ count the number of~$B\in\F_S$ such that~$A_B\subset
  H^{(k)}(n,p)$.
  Applying Lemma~\ref{lem:BHM}~\ref{BHMii} we have
  \[
    \mathbb{P} [X_S\geq (\eps')^bn^{b^2}p^{(b+1)f}/4]
    \geq 1-\exp(-n),
  \]
  which implies that $\mathbb{P}[X_S=0]\leq \exp(-n)$.
  Since there are $\binom{n}{b}$ possible choices for $S$ and at most~$2^n$ possible choices for $W$, we have
  \[
    \mathbb{P}\left[\,\text{there are } S\in \binom{V}b
      \text{ and } W\subseteq V \text{ such that } X_S=0\right]
    \leq \binom{n}{b}2^n\exp(-n)=o(1).
  \]
  Thus, a.a.s.\ there is a simple $S$-absorber in $V'$ for every $S\in\binom{V}{b}$ and every $W\subseteq V\setminus S$ with $|W|=b^2(\r n-1)$.
  This concludes the proof.
\end{proof}

\section{Proof of the main theorem}
\label{s4}
In this section we use Lemmas~\ref{lem:abs},~\ref{lem:absa} and~\ref{lem:BHM}~\ref{BHMi} to prove Theorem~\ref{main}.

\begin{proof}[Proof of Theorem~\ref{main}]
  Suppose $F$ is a $k$-graph with~$b$ vertices and~$f$ edges and let $n\in b\bbN$ be sufficiently large.
  Let $\eps>0$ be given and suppose $1/n\ll\r\ll\eps,\,1/b$.
  Let~$\xi$ be the constant given by Lemma~\ref{lem:abs} and put $c'\gg c,$ $1/\xi$, where $c$ is the constant given by Lemma~\ref{lem:absa}.

  Suppose $p\geq2c'n^{-1/d^*(F)}$ and let $H$ be an $n$-vertex $k$-graph with vertex set $V$ such that $\delta(H)\geq (\a_F+\eps)\binom{n-1}{k-1}$.
  We will expose $H':=H^{(k)}(n,p)$ in two rounds: $H'=H_1\cup H_2$, where $H_1$ and $H_2$ are independent copies of $H^{(k)}(n,p')$ and $(1-p')^2=1-p$.  Note that since $(1-p')^2>1-2p'$, we have $p'> p/2\geq c'n^{-1/d^*(F)}$.

  The first step is to find an absorbing set in~$H\cup H_1$ using
  Lemma~\ref{lem:abs}.  We shall apply that lemma
  with~$V_0=\emptyset$.  By Lemma~\ref{lem:absa}, a.a.s., for every
  $S\in\binom{V}{b}$, there are at least~$\r n$ pairwise
  vertex-disjoint simple $S$-absorbers in $V\setminus S$ in
  $H\cup H_1$ and hence hypothesis~(\textit{i}) of Lemma~\ref{lem:abs}
  holds.  We now note that our simple absorbers have the following
  additional property.  Suppose~$A'$ is a simple $S$-absorber.  Then
  \begin{itemize}
  \item[(*)] $(H\cup H_1)[S\cup A']$ contains an $F$-factor
    $\{F_1,F_2,\dots\}$ with each $F_i$ with $|V(F_i)\cap S|\leq1$.
  \end{itemize}
  It follows from this property that the $\r n$ vertex-disjoint simple
  $S$-absorbers in $V\setminus S$ in $H\cup H_1$ force the existence
  of $\r n$ copies of~$F$ in $H\cup H_1$ that are pairwise
  vertex-disjoint except for the vertex~$v$.  Thus
  hypothesis~(\textit{ii}) of Lemma~\ref{lem:abs} holds.
  Lemma~\ref{lem:abs} implies that $H\cup H_1$ contains an absorbing
  set~$A$ of size at most~$\r n$~a.a.s.

  The second step is to find an almost $F$-factor in $H_2$ that covers all of $V\setminus A$ but at most~$\xi n$ vertices.
  Since $p'> p/2\geq c'n^{-1/d^*(F)}$, Lemma \ref{lem:PhiF} guarantees that $\Phi_F(n,p')\geq c'n$.
  Applying Lemma~\ref{lem:BHM}~\ref{BHMi} to $H_2$ with $\l=\xi$, a.a.s.\ every induced subgraph of $H_2$ of order~$\xi n$ contains a copy of $F$.
  Thus we can greedily find pairwise vertex-disjoint copies of $F$ in $V\setminus A$ until there are at most $\xi n$ vertices left.
  Denote by $R$ the set of these remaining vertices of $V\setminus A$.
  Then $|R|\leq\xi n$ a.a.s.

  The last step is to absorb all vertices of $R$ using the absorbing set $A$.
  Since $b=|V(F)|$ divides~$n$ and $V\setminus(A\cup R)$ is covered by vertex disjoint copies of $F$, we have that~$b$ divides $|A|+|R|$.
  Recall that~$A$ is an absorbing set in~$H\cup H_1$, which implies that $(H\cup H_1)[A\cup R]$ contains an $F$-factor.
  This implies that, a.a.s., we have the desired $F$-factor in~$H\cup H_1\cup H_2$.
\end{proof}

\section{Concluding Remarks}
\label{sec:concluding}
We have studied the $F$-factor problem in the randomly perturbed $k$-graphs $H\cup H^{(k)}(n,p)$ under the vertex degree condition $\delta(H)\geq (\a_F+\eps)\binom{n-1}{k-1}$ and edge probability condition~$p=\Omega(n^{-1/d^*(F)})$.
Let us close with a remark concerning the parameter~$\alpha_F$.
Recall~\eqref{eq:3} and recall that the definition of~$\Lambda_{F,v}$ involves the object~$F_v$.

Instead of~$F_v$, we can consider the \emph{link} $F_v':=\{e\setminus \{v\}: e\in E(F_v)\}$ of~$v$ in~$F$.
Then $\alpha_F=\min_{v\in V(F)}\pi(F_v')$, where~$\pi(F_v')$ is the \emph{Tur\'an density threshold} of the $(k-1)$-graph~$F_v'$, namely, $\pi(F_v')=\lim_{n\to\infty}\ex(n,F_v'){n\choose k-1}^{-1}$.
The fact that~$\alpha_F$ can be expressed in this way follows from supersaturation~\cite[Corollary 2]{Erdos}.
A related remark is that, while we require~$\eps' n^{v_F-1}$ embeddings of $F_v$ in our definition of~$\alpha_F$, supersaturation tells us that we could
require just one.

To conclude, we leave it as an open question whether or not~$\alpha_F$ is required (when $\alpha_F>0$) in the vertex degree condition $\delta(H)\geq (\a_F+\eps)\binom{n-1}{k-1}$.

\section{Acknowledgements}

We thank the anonymous referees for their careful reading and helpful comments.

\begin{bibdiv}
\begin{biblist}

\bib{AY}{article}{
      author={Alon, Noga},
      author={Yuster, Raphael},
       title={{$H$}-factors in dense graphs},
        date={1996},
        ISSN={0095-8956},
     journal={J. Combin. Theory Ser. B},
      volume={66},
      number={2},
       pages={269\ndash 282},
         url={https://doi.org/10.1006/jctb.1996.0020},
      review={\MR{1376050}},
}

\bib{ADRRS}{article}{
      author={Antoniuk, S.},
      author={Dudek, A.},
      author={Reiher, C.},
      author={Ruci\'nski, A.},
      author={Schacht, M.},
       title={High powers of {H}amiltonian cycles in randomly augmented
  graphs},
        date={2020},
     journal={ArXiv e-prints},
      eprint={2002.05816},
}

\bib{BTW}{article}{
      author={Balogh, J\'{o}zsef},
      author={Treglown, Andrew},
      author={Wagner, Adam~Zsolt},
       title={Tilings in randomly perturbed dense graphs},
        date={2019},
        ISSN={0963-5483},
     journal={Combin. Probab. Comput.},
      volume={28},
      number={2},
       pages={159\ndash 176},
         url={https://doi.org/10.1017/S0963548318000366},
      review={\MR{3922775}},
}

\bib{BHKM}{article}{
      author={Bedenknecht, Wiebke},
      author={Han, Jie},
      author={Kohayakawa, Yoshiharu},
      author={Mota, Guilherme~O.},
       title={Powers of tight {H}amilton cycles in randomly perturbed
  hypergraphs},
        date={2019},
        ISSN={1042-9832},
     journal={Random Structures Algorithms},
      volume={55},
      number={4},
       pages={795\ndash 807},
         url={https://doi.org/10.1002/rsa.20885},
      review={\MR{4025389}},
}

\bib{BDF}{article}{
      author={Bennett, Patrick},
      author={Dudek, Andrzej},
      author={Frieze, Alan},
       title={Adding random edges to create the square of a {H}amilton cycle},
        date={2017},
     journal={ArXiv e-prints},
      eprint={1710.02716v1},
}

\bib{BFM}{article}{
      author={Bohman, Tom},
      author={Frieze, Alan},
      author={Martin, Ryan},
       title={How many random edges make a dense graph {H}amiltonian?},
        date={2003},
        ISSN={1042-9832},
     journal={Random Structures Algorithms},
      volume={22},
      number={1},
       pages={33\ndash 42},
         url={https://doi.org/10.1002/rsa.10070},
      review={\MR{1943857}},
}

\bib{BHKMPP}{article}{
      author={B\"{o}ttcher, Julia},
      author={Han, Jie},
      author={Kohayakawa, Yoshiharu},
      author={Montgomery, Richard},
      author={Parczyk, Olaf},
      author={Person, Yury},
       title={Universality for bounded degree spanning trees in randomly
  perturbed graphs},
        date={2019},
        ISSN={1042-9832},
     journal={Random Structures Algorithms},
      volume={55},
      number={4},
       pages={854\ndash 864},
         url={https://doi.org/10.1002/rsa.20850},
      review={\MR{4025392}},
}

\bib{BMPP}{article}{
      author={B\"{o}ttcher, Julia},
      author={Montgomery, R.},
      author={Parczyk, O.},
      author={Person, Y.},
       title={Embedding spanning bounded degree subgraphs in randomly perturbed
  graphs},
        date={2019},
     journal={Mathematika},
      volume={66},
      number={2},
       pages={422\ndash 447},
}

\bib{CHT}{article}{
      author={Chang, Y.},
      author={Han, J.},
      author={Thoma, L.},
       title={On powers of tight {H}amilton cycles in randomly perturbed
  hypergraphs},
        date={2020},
     journal={ArXiv e-prints},
      eprint={2007.11775v1},
}

\bib{DMT}{article}{
      author={Das, Shagnik},
      author={Morris, Patrick},
      author={Treglown, Andrew},
       title={Vertex {R}amsey properties of randomly perturbed graphs},
        date={2020},
     journal={Random Structures Algorithms},
      volume={57},
      number={4},
       pages={983\ndash 1006},
}

\bib{DT}{article}{
      author={Das, Shagnik},
      author={Treglown, Andrew},
       title={Ramsey properties of randomly perturbed graphs: cliques and
  cycles},
        date={2020},
     journal={Combin. Probab. Comput.},
      volume={29},
      number={6},
       pages={830\ndash 867},
}

\bib{DRRS}{article}{
      author={Dudek, Andrzej},
      author={Reiher, Christian},
      author={Ruci\'{n}ski, Andrzej},
      author={Schacht, Mathias},
       title={Powers of {H}amiltonian cycles in randomly augmented graphs},
        date={2020},
        ISSN={1042-9832},
     journal={Random Structures Algorithms},
      volume={56},
      number={1},
       pages={122\ndash 141},
         url={https://doi.org/10.1002/rsa.20870},
      review={\MR{4052848}},
}

\bib{Erdos}{article}{
      author={Erd\H{o}s, Paul},
      author={Simonovits, Mikl\'{o}s},
       title={Supersaturated graphs and hypergraphs},
        date={1983},
        ISSN={0209-9683},
     journal={Combinatorica},
      volume={3},
      number={2},
       pages={181\ndash 192},
         url={https://doi.org/10.1007/BF02579292},
      review={\MR{726456}},
}

\bib{GM}{article}{
      author={Gerke, Stefanie},
      author={McDowell, Andrew},
       title={Nonvertex-balanced factors in random graphs},
        date={2015},
        ISSN={0364-9024},
     journal={J. Graph Theory},
      volume={78},
      number={4},
       pages={269\ndash 286},
         url={https://doi.org/10.1002/jgt.21805},
      review={\MR{3312137}},
}

\bib{HS}{inproceedings}{
      author={Hajnal, A.},
      author={Szemer\'{e}di, E.},
       title={Proof of a conjecture of {P}.~{E}rd{\H o}s},
        date={1970},
   booktitle={Combinatorial theory and its applications, {II} ({P}roc.
  {C}olloq., {B}alatonf\"{u}red, 1969)},
       pages={601\ndash 623},
         url={http://www.ams.org/mathscinet-getitem?mr=MR0297607},
      review={\MR{0297607}},
}

\bib{HPT}{article}{
      author={Han, J.},
      author={Morris, P.},
      author={Treglown, A.},
       title={Tilings in randomly perturbed graphs: bridging the gap between
  {H}ajnal--{S}zemer\'{e}di and {J}ohansson--{K}ahn--{V}u},
        date={2021},
     journal={Random Structures Algorithms},
      volume={58},
      number={3},
       pages={480\ndash 516},
}

\bib{HZ}{article}{
      author={Han, Jie},
      author={Zhao, Yi},
       title={Hamiltonicity in randomly perturbed hypergraphs},
        date={2020},
        ISSN={0095-8956},
     journal={J. Combin. Theory Ser. B},
      volume={144},
       pages={14\ndash 31},
         url={https://doi.org/10.1016/j.jctb.2019.12.005},
      review={\MR{4115532}},
}

\bib{Janson}{book}{
      author={Janson, Svante},
      author={{\L}uczak, Tomasz},
      author={Ruci{\'n}ski, Andrzej},
       title={Random graphs},
   publisher={Wiley-Interscience, New York},
        date={2000},
        ISBN={0-471-17541-2},
      review={\MR{2001k:05180}},
}

\bib{JKV}{article}{
      author={Johansson, Anders},
      author={Kahn, Jeff},
      author={Vu, Van},
       title={Factors in random graphs},
        date={2008},
        ISSN={1042-9832},
     journal={Random Structures Algorithms},
      volume={33},
      number={1},
       pages={1\ndash 28},
         url={https://doi.org/10.1002/rsa.20224},
      review={\MR{2428975}},
}

\bib{JK}{article}{
      author={Joos, Felix},
      author={Kim, Jaehoon},
       title={Spanning trees in randomly perturbed graphs},
        date={2020},
        ISSN={1042-9832},
     journal={Random Structures Algorithms},
      volume={56},
      number={1},
       pages={169\ndash 219},
         url={https://doi.org/10.1002/rsa.20886},
      review={\MR{4052851}},
}

\bib{HK}{article}{
      author={Kirkpatrick, D.~G.},
      author={Hell, P.},
       title={On the complexity of general graph factor problems},
        date={1983},
        ISSN={0097-5397},
     journal={SIAM J. Comput.},
      volume={12},
      number={3},
       pages={601\ndash 609},
         url={https://doi.org/10.1137/0212040},
      review={\MR{707416}},
}

\bib{KSS}{incollection}{
      author={Koml\'{o}s, J\'{a}nos},
      author={S\'{a}rk\"{o}zy, G\'{a}bor~N.},
      author={Szemer\'{e}di, Endre},
       title={Proof of the {A}lon-{Y}uster conjecture},
        date={2001},
      volume={235},
       pages={255\ndash 269},
         url={https://doi.org/10.1016/S0012-365X(00)00279-X},
        note={Combinatorics (Prague, 1998)},
      review={\MR{1829855}},
}

\bib{KKS2}{article}{
      author={Krivelevich, Michael},
      author={Kwan, Matthew},
      author={Sudakov, Benny},
       title={Cycles and matchings in randomly perturbed digraphs and
  hypergraphs},
        date={2016},
        ISSN={0963-5483},
     journal={Combin. Probab. Comput.},
      volume={25},
      number={6},
       pages={909\ndash 927},
         url={https://doi.org/10.1017/S0963548316000079},
      review={\MR{3568952}},
}

\bib{KKS}{article}{
      author={Krivelevich, Michael},
      author={Kwan, Matthew},
      author={Sudakov, Benny},
       title={Bounded-degree spanning trees in randomly perturbed graphs},
        date={2017},
        ISSN={0895-4801},
     journal={SIAM J. Discrete Math.},
      volume={31},
      number={1},
       pages={155\ndash 171},
         url={https://doi.org/10.1137/15M1032910},
      review={\MR{3595872}},
}

\bib{KST}{article}{
      author={Krivelevich, Michael},
      author={Sudakov, Benny},
      author={Tetali, Prasad},
       title={On smoothed analysis in dense graphs and formulas},
        date={2006},
        ISSN={1042-9832},
     journal={Random Structures Algorithms},
      volume={29},
      number={2},
       pages={180\ndash 193},
         url={https://doi.org/10.1002/rsa.20097},
      review={\MR{2245499}},
}

\bib{KO1}{incollection}{
      author={K\"{u}hn, Daniela},
      author={Osthus, Deryk},
       title={Embedding large subgraphs into dense graphs},
        date={2009},
   booktitle={Surveys in combinatorics 2009},
      series={London Math. Soc. Lecture Note Ser.},
      volume={365},
   publisher={Cambridge Univ. Press, Cambridge},
       pages={137\ndash 167},
         url={http://www.ams.org/mathscinet-getitem?mr=MR2588541},
      review={\MR{2588541}},
}

\bib{KO}{article}{
      author={K\"{u}hn, Daniela},
      author={Osthus, Deryk},
       title={The minimum degree threshold for perfect graph packings},
        date={2009},
        ISSN={0209-9683},
     journal={Combinatorica},
      volume={29},
      number={1},
       pages={65\ndash 107},
         url={https://doi.org/10.1007/s00493-009-2254-3},
      review={\MR{2506388}},
}

\bib{MM}{article}{
      author={McDowell, Andrew},
      author={Mycroft, Richard},
       title={Hamilton {$\ell$}-cycles in randomly perturbed hypergraphs},
        date={2018},
     journal={Electron. J. Combin.},
      volume={25},
      number={4},
       pages={Paper No. 4.36, 30},
         url={http://www.ams.org/mathscinet-getitem?mr=MR3891103},
      review={\MR{3891103}},
}

\bib{Mont}{article}{
      author={Montgomery, R.},
       title={Embedding bounded degree spanning trees in random graphs},
        date={2014},
     journal={ArXiv e-prints},
      eprint={1405.6559},
}

\bib{Mont2}{article}{
      author={Montgomery, Richard},
       title={Spanning trees in random graphs},
        date={2019},
        ISSN={0001-8708},
     journal={Adv. Math.},
      volume={356},
       pages={106793, 92},
         url={https://doi.org/10.1016/j.aim.2019.106793},
      review={\MR{3998769}},
}

\bib{Nena}{article}{
      author={Nenadov, Rajko},
      author={Pehova, Yanitsa},
       title={On a {R}amsey-{T}ur\'{a}n variant of the
  {H}ajnal--{S}zemer\'{e}di theorem},
        date={2020},
        ISSN={0895-4801},
     journal={SIAM J. Discrete Math.},
      volume={34},
      number={2},
       pages={1001\ndash 1010},
         url={https://doi.org/10.1137/18M1211970},
      review={\MR{4080942}},
}

\bib{NT}{article}{
      author={Nenadov, Rajko},
      author={Truji{\'c}, Milo{\v s}},
       title={Sprinkling a few random edges doubles the power},
        date={2018},
     journal={ArXiv e-prints},
      eprint={1811.09209v1},
}

\bib{P}{article}{
      author={Powierski, Emil},
       title={Ramsey properties of randomly perturbed dense graphs},
        date={2019},
     journal={ArXiv e-prints},
      eprint={1902.02197v1},
}

\bib{RR}{incollection}{
      author={R\"{o}dl, Vojtech},
      author={Ruci\'{n}ski, Andrzej},
       title={Dirac-type questions for hypergraphs---a survey (or more problems
  for {E}ndre to solve)},
        date={2010},
   booktitle={An irregular mind},
      series={Bolyai Soc. Math. Stud.},
      volume={21},
   publisher={J\'{a}nos Bolyai Math. Soc., Budapest},
       pages={561\ndash 590},
         url={https://doi.org/10.1007/978-3-642-14444-8_16},
      review={\MR{2815614}},
}

\bib{Rodl1}{article}{
      author={R\"{o}dl, Vojt\v{e}ch},
      author={Ruci\'{n}ski, Andrzej},
      author={Szemer\'{e}di, Endre},
       title={A {D}irac-type theorem for $3$-uniform hypergraphs},
        date={2006},
        ISSN={0963-5483},
     journal={Combin. Probab. Comput.},
      volume={15},
      number={1-2},
       pages={229\ndash 251},
         url={https://doi.org/10.1017/S0963548305007042},
      review={\MR{2195584}},
}

\bib{Zhao}{incollection}{
      author={Zhao, Yi},
       title={Recent advances on {D}irac-type problems for hypergraphs},
        date={2016},
   booktitle={Recent trends in combinatorics},
      series={IMA Vol. Math. Appl.},
      volume={159},
   publisher={Springer, [Cham]},
       pages={145\ndash 165},
         url={https://doi.org/10.1007/978-3-319-24298-9_6},
      review={\MR{3526407}},
}

\end{biblist}
\end{bibdiv}

\end{document}